\pgfplotsset{compat=1.17}
\newcommand{\MOT}{\ensuremath{\text{MOT}}}
\newcommand{\MOTe}{\ensuremath{\text{MOT}_{\eta}}}
\newcommand{\Ker}{\boldsymbol{\mathcal{K}}}
\newcommand{\bOmega}{\boldsymbol{\Omega}}
\newcommand{\bx}{{\boldsymbol{x}}}
\newcommand{\bI}{{\boldsymbol{I}}}
\newcommand{\bi}{{\boldsymbol{i}}}
\newcommand{\bn}{{\boldsymbol{n}}}
\newcommand{\bC}{{\boldsymbol{C}}}
\newcommand{\bF}{{\boldsymbol{F}}}
\newcommand*{\rom}[1]{\expandafter\@slowromancap\romannumeral #1@}
\newcommand{\R}{\mathbb{R}}
\newcommand{\C}{\mathbb{C}}
\newcommand{\N}{\mathbb{N}}
\newcommand{\tT}{\intercal}
\newcommand{\im}{\mathrm{i}}
\newcommand{\uproman}[1]{\uppercase\expandafter{\romannumeral#1}}
\renewcommand\norm[1]{\left\lVert#1\right\rVert}
\theoremstyle{plain}
\newtheorem{lemma}{Lemma}[section]
\newtheorem{theorem}[lemma]{Theorem}
\newtheorem{proposition}[lemma]{Proposition}
\theoremstyle{definition}
\pgfplotsset{
	sticky options/.store in=\pgfplots@group@stickyoptions,
	sticky options={}
}
\def\pgfplots@group@nextplot[#1]{%
	\ifnum\pgfplots@group@current@plot=\pgfplots@group@totalplots\relax
	\pgfplotswarning{groupplots/too many plots}{\the\pgfplots@group@current@plot}{\pgfplots@group@totalplots}\pgfeov
	\else
	%
	%
	\ifnum0<\pgfplots@group@current@plot\relax
	\endpgfplots@environment@opt
	\fi
	
	\begingroup
	\pgfqkeys{/pgfplots}{#1}
	\pgfmath@smuggleone\pgfplots@group@stickyoptions
	\endgroup
	\pgfplots@group@increment@numbers
	\pgfplots@group@environment@create{#1, \pgfplots@group@stickyoptions}
	\fi
}
\newcommand\gpsubtitle[1]{(\@alph{\pgfplots@group@current@plot}) #1}
\begin{document}
	\title{Accelerating the Sinkhorn algorithm for sparse multi-marginal optimal transport by fast Fourier transforms}
	
	\author{
		Fatima Antarou Ba\footnotemark[1]\\ {\footnotesize\href{mailto:fatimaba@math.tu-berlin.de}{fatimaba@math.tu-berlin.de}}
		\and 
		Michael Quellmalz\footnotemark[1]\\ {\footnotesize\href{mailto:quellmalz@math.tu-berlin.de}{quellmalz@math.tu-berlin.de}}
	}
	
		\date{August 26, 2022}
	\maketitle

	\footnotetext[1]{
		TU Berlin,
		Institute of Mathematics,
		Stra{\ss}e des 17. Juni 136,
		D-10587 Berlin, Germany.
	} 
	\begin{abstract}
		We consider the numerical solution of the discrete multi-marginal optimal transport (MOT) by means of the Sinkhorn algorithm. In general, the Sinkhorn algorithm suffers from the curse of dimensionality  with respect to the number of marginals. If the MOT cost function decouples according to a tree or circle, its complexity is linear in the number of marginal measures. In this case, we speed up the convolution with the radial kernel required in the Sinkhorn algorithm by non-uniform fast Fourier methods. Each step of the proposed accelerated Sinkhorn algorithm with a tree-structured cost function has a complexity of $\mathcal O(K N)$ instead of the classical $\mathcal O(K N^2)$ for straightforward matrix--vector operations, where $K$ is the number of marginals and each marginal measure is supported on at most $N$ points. In case of a circle-structured cost function, the complexity improves from $\mathcal O(K N^3)$ to $\mathcal O(K N^2)$. This is confirmed by numerical experiments.
	\end{abstract}
	
	\section{Introduction}
	
	The optimal transport (OT) problem is an optimization problem that deals with the search for an optimal map (plan) that moves masses between two or more measures at low cost \cite{peyre2020computational,Villani2009}.
	OT appears in a wide range of applications such as  image and signal processing \cite{beier2021unbalanced, WassersteinBarycentric2016, PeyreWasserseinloss2016, Thorpe2017, Vogt2018}, economics \cite{carlier2014numerical,Galichon2016}, finance \cite{Dolinsky2014, RobustDolinsky2014}, 
	and physics \cite{Frisch2002ARO,haasler2021multimarginal}.
	The OT problem was first introduced in 1781 by 
	Monge. His objective was to find a map between two probability measures $\mu^1, \mu^2$ on $\R^d$ that transports $\mu^1$ to $\mu^2$ with minimal cost,
	where the cost function describes the cost of transporting mass between two points in $\R^d$. However, 
	such maps do not always exist, so that
	Kantorovich \cite{Ka58} relaxed the problem in 1942 by looking for a transport plan with two prescribed marginals $\mu^1$ and $\mu^2$ that minimizes a certain cost functional.
	
	Several authors have generalized the formulation to multi-marginal optimal transport (MOT)~\cite{lin2020complexity,pass2014multiagent,pass2014multimarginal}, where more than two marginal measures are given.
	For given probability measures $\mu^k$ on $\Omega^k\subset\R^d$, $k=1,\dots,K$,
	an optimal transport plan $\pi$ is defined as a solution of the MOT problem
	\begin{equation}\label{eq:MOT_cont}
		\min_{\pi \in \Pi(\mu^1, \ldots, \mu^k)} \int_{\Omega^1\times\dots\times\Omega^K} c(x^1, \ldots, x^K) \,\mathrm d\pi(x^1, \ldots, x^K) 
	\end{equation}
	where 
	$\Pi(\mu^1, \cdots, \mu^K)$ is the convex set of all joint probability measures $\pi$ whose marginals are $\mu^k$,
	and $c\colon \Omega^1\times\dots\times\Omega^K \to \R$ is the cost function. 
	
	Since the numerical computation of a transport plan is difficult in general, a regularization term such as the entropy \cite{MR3617575,haasler2021multimarginal}, Kullback-Leibler divergence \cite{neumayer2020optimal}, general $f$-divergence~\cite{terjek2021optimal} or $L^2$-regularization~\cite{pmlr-v84-blondel18a,lorenz2019quadratically} can be added to make the problem strictly convex. 
	Different approaches such as the Sinkhorn algorithm~\cite{haasler2021multimarginal, peyre2020computational}, stochastic gradient descent \cite{aude2016stochastic}, the Gauss-Seidel method \cite{lorenz2019quadratically}, or the proximal splitting~\cite{Ammari2014} have been used to iteratively determine a minimizing sequence of the MOT problem. 
	
	However, the problem suffers from the curse of dimensionality as the complexity grows exponentially with the number $K$ of marginal measures. 
	One way to circumvent this lies in incorporating additional sparsity assumptions into the model. Polynomial-time algorithms to solve certain sparse MOT problems have been studied in~\cite{AltschulerBoix-AdseraPolynomial, BenamouCarlierCuturi2015, haasler2021multimarginal}. We will assume that the cost function decouples according to a graph, where the nodes correspond to the marginals and 
	the cost function is the sum of functions that depend only on two variables which correspond to two marginals connected by an edge of the graph.
	For example, the circle-structured Euclidean cost function reads as
	$$
	c(x^1,\dots,x^K)
	=
	\norm{x^1-x^2}_2^2 
	+ \dots 
	+ \norm{x^{K-1}-x^K}_2^2
	+ \norm{x^{K}-x^1}_2^2,
	\quad x^1,\dots,x^K\in\R^d.
	$$
	MOT problems with graph-structured cost functions with constant treewidth can be solved with the Sinkhorn algorithm in polynomial time \cite{KollerFriedman2009}.
	In \cite{AltschulerBoix-AdseraPolynomial}, 
	polynomial-time algortihms were presented for the MOT problem and its regularized counterpart
	for the cases of a graph structure, a set-optimization structure, as well as a low-rank and sparsely structured cost function.
	Another sparsity assumption lies on the transport plan to be thinly spread, which is e.g.\ the case for the $L^2$-regularized problem \cite{pmlr-v84-blondel18a,lorenz2019quadratically}.

	\paragraph{\textbf{Our contributions.}}
	
	In the present paper, we study the discrete, entropy-regularized $\MOT$ problem with a tree-structured \cite{beier2021unbalanced,haasler2021multimarginal} or a circle-structured cost function, where all measures are supported on a finite number of points (atoms) in $\R^d$.
	Then the computational time of the Sinkhorn algorithm \cite{alaya2020screening, NIPS2013_af21d0c9, Sinkhorn1967ConcerningNM}, 
	which iteratively determines a sequence converging to the solution,
	depends linearly on the number $K$ of input measures.
	If the numbers of atoms is large, however, the Sinkhorn algorithm still requires considerable computational time and memory,
	which mainly comes from computing a discrete convolution, i.e.\ a matrix--vector product with a kernel matrix.
	
	This is significantly improved by Fourier-based fast summation methods \cite{post02,postni04}.
	The key idea is the approximation of the kernel function by a Fourier series, which enables the application of the non-uniform fast Fourier transform (NFFT).
	Although such fast summation methods are frequently used in different applications such as electrostatic particle interaction \cite{NePiPo13}, tomography \cite{HiQu15}, image segmentation \cite{AlPoStVo18}
	and, very recently, also OT with two marginals \cite{fastfouriersinkhorn2022},
	they were not utilized for MOT so far.
	Furthermore, a method for accelerating the Sinkhorn algorithm for Wasserstein barycenters via computing the convolution with a different kernel, namely the heat kernel, was discussed in \cite{Solomon2015}.
	
	Our main contribution is the combination of the fast summation method with the sparsity of the tree- or circle-structured cost function in the MOT problem for accelerating the Sinkhorn algorithm.
	Each iteration step has a complexity of 
	$\mathcal O(K N)$ for a tree-
	and $\mathcal O(K N^2)$ for a circle-structured cost function,
	compared to $\mathcal O(K N^2)$ and $\mathcal O(K N^3)$, respectively, with the straightforward matrix--vector operations,
	where $N$ is an upper bound of the number of atoms for each of the $K$ marginal measures.
	Our numerical tests with both tree- and circle-structured cost functions confirm a considerable acceleration while the accuracy stays almost the same.
	{
		A different acceleration of the Sinkhorn algorithm via low-rank approximation for tree-structured cost yields the same asymptotic complexity \cite{StrKre22}.
		We note that MOT problems with tree-structured cost functions are used for the computation of Wasserstein barycenters \cite{WassersteinBarycentric2016}, 
		and with circle-structured cost for computing Euler flows \cite{BenamouCarlierNenna20217}.
	}
	
	\paragraph{\textbf{Outline of the paper.}}
	Section~\ref{sec:Notation} introduces the notation.
	In section~\ref{sec:MOT_general_problem_setting}, we focus on the discrete MOT problem with squared Euclidean norm cost functions and the numerical solution of the corresponding entropy-regularized problem by the Sinkhorn algorithm. 
	We investigate sparsely structured cost functions 
	that decouple according to a tree or circle in section~\ref{sec:sparse_MOT}. Then the complexity of the Sinkhorn algorithm depends only linearly on $K$. 
	Section~\ref{sec:NFFT-Sinkhorn} describes a fast summation method for further accelerating the Sinkhorn algorithm.
	Finally, in section~\ref{sec:Numerical_examples},
	we verify the practical performance by applying it to generalized Euler flows and for finding generalized Wasserstein barycenters. We compare the computational time of the proposed Sinkhorn algorithm based on the NFFT with the algorithm based on direct matrix multiplication. 
	\section{Notation}
	\label{sec:Notation}
	Let $K\in\N$ and $\bn = (n_1,\dots,n_k) \in \N^K$.
	We set $[K] \coloneqq \set{1,\cdots, K}$
	and consider $K$ finite sets
	$$
	\Omega^k
	=
	\left\{\bx^k_{i_k}  : i_k\in[n_k]\right\}
	\subset \R^d,
	\quad k\in[K],
	$$
	consisting of points $\bx^k_{i_k}\in\R^d$, which are called \emph{atoms}. We set  
	$\bOmega
	\coloneqq
	\Omega^1\times\dots\times\Omega^K$.
	Additionally, we define 
	the index set
	\begin{equation}
		\boldsymbol{I}\coloneqq 
		\left\{ \bi = (i_1,\dots,i_K) : i_k\in[n_k], k\in [K] \right\}
	\end{equation}
	and the set of $K$-dimensional matrices (tensors) $$\R^{\bi}\coloneqq \R^{i_1}\times\dots\times \R^{i_K}, \quad \bi \in \bI.$$
	
	Let $\mathbb{P}(\Omega^k)$ denote the set of probability measures on $\Omega^k$.
	In this paper, we consider $K$ discrete probability measures, also called \emph{marginal measures}, $\mu^k \in \mathbb{P}(\Omega^k)$ given by 
	\begin{equation} \label{eq:mu_discrete}
		\mu^k = \sum_{i_k=1}^{n_k} \mu_{i_k}^k \delta_{\bx_{i_k}^k},
		\quad k\in[K],
	\end{equation}
	where the probabilities  satisfy 
	$$
	\mu_{i_k}^k \geq 0,\quad \sum_{i_k=1}^{n_k}\mu_{i_k}^k=1
	\quad\text{for all } i_k\in[n_k],\ k\in[K],
	$$ 
	and, for all $A \subset \Omega^k$, the Dirac measure is given by
	\begin{equation}
		\delta_{x^k_{i_k}}(A) \coloneqq \begin{cases}
			1 & \text{if }  x^k_{i_k}\in A,\\
			0 & \text{otherwise}.
		\end{cases}
	\end{equation}

	For $\boldsymbol G,\boldsymbol  H\in \R^{\bn}$,
	we denote their component-wise (Hadamard) product by
	\begin{equation}
		\boldsymbol G\odot \boldsymbol H \coloneqq \left( G_{\bi} H_{\bi}\right)_{\bi\in\bI} \in \R^{\bn},
	\end{equation}
	and
	similarly their component-wise division by $\oslash$,
	as well as the Frobenius inner product
	\begin{equation}
		\langle \boldsymbol G, \boldsymbol H\rangle_\mathrm{F} \coloneqq
		\sum_{\bi\in\bI}  G_{\bi} H_{\bi} \in \R.
	\end{equation}
	The tensor product (Kronecker product) of $\boldsymbol u,\boldsymbol v \in \R^m$ is denoted by $\boldsymbol u\otimes \boldsymbol v \in \R^{m\times m}$. 
	Analogues can be defined for tensors of different size.
	
	\section{Multi-marginal optimal transport}\label{sec:MOT_general_problem_setting}
	In the following, we consider the discrete \emph{multi-marginal optimal transport} (MOT) between $K$ marginal measures $\mu^k\in\mathbb{P}(\Omega^k)$, $k\in[K]$.
	We define the set of \emph{admissible transport plans} by
	\begin{equation} \label{eq:feasible_discrete_plan}
		\Pi(\mu^1, \ldots, \mu^k) \coloneqq \set{ \boldsymbol{\Pi} \in \R^\bn_{\ge0} :P_k(\boldsymbol{\Pi})
			= \mu^k \text{ for all } k\in [K]},
	\end{equation}
	where the $k$-th marginal projection is defined as 
	\begin{equation} \label{eq:Pk}
		P_k(\boldsymbol{\Pi})\coloneqq
		\sum_{\ell\in [K]\setminus{\set{k}}}\sum_{i_{\ell} \in [n_{\ell}]}\boldsymbol{\Pi}_{i_1,\cdots, i_K} \in \R^{n_k}.
	\end{equation}
	For a \emph{cost function} $c\colon\bOmega \rightarrow \R_{\ge0}$
	and 
	the samples $\bx_\bi=\left(x^1_{i_1}, \ldots, x^K_{i_K}\right)$, $\bi\in\bI$,
	we define the respective \emph{cost matrix} 
	\begin{equation}\label{eq:C}
		\boldsymbol{C} \coloneqq [\boldsymbol{C}_{\bi}]_{\bi\in\bI} = [c(\bx_{\bi})]_{\bi\in\bI}=[c(\bx^1_{i_1}, \cdots, \bx^K_{i_K})]_{(i_1,\cdots,i_K)\in \bI} \in \R_{\ge0}^{\bn}.
	\end{equation}
	The discrete MOT problem reads
	\begin{equation} \label{eq:MOT}
		\min_{\boldsymbol{\Pi} \in \Pi(\mu^1, \ldots, \mu^k)}\langle \boldsymbol{\Pi}, \bC\rangle_\mathrm{F},
	\end{equation}
	whose solution $\boldsymbol{\Pi}\in \R^\bn_{\ge0}$ is called \emph{optimal plan}.
	
	\subsection{Entropy regularization}\label{subsec:entropy_reg}
	
	As the MOT problem \eqref{eq:MOT} is numerically unfeasible,
	we consider for $\eta>0$ the entropy-regularized multi-marginal optimal transport ($\MOTe$) problem
	\begin{equation}\label{eq:MOTe}
		\min_{\boldsymbol{\Pi} \in \Pi(\mu^1, \ldots, \mu^K)} \langle \boldsymbol{\Pi}, \boldsymbol{C}\rangle_\mathrm{F}  + \eta \langle \boldsymbol{\Pi}, \log \boldsymbol{\Pi}-\boldsymbol{1}_\bn \rangle_\mathrm{F} = \min_{\boldsymbol{\Pi} \in \Pi(\mu^1, \ldots, \mu^k)} \sum_{\bi \in \boldsymbol{I}} \boldsymbol{\Pi}_{\bi}\boldsymbol{C}_{\bi} + \eta \sum_{\bi\in\bI} \boldsymbol{\Pi}_{\bi} \left(\log \boldsymbol{\Pi}_{\bi}-1\right),
	\end{equation}
	which is a convex optimization problem.
	It is possible to numerically deduce the optimal transport plan $\hat{\boldsymbol \Pi}$ of \eqref{eq:MOTe} from the solution of the corresponding Lagrangian dual problem.
	The following theorem is a special case of \cite{beier2021unbalanced} for a constant entropy function.
	\begin{theorem}\label{dual_form_MOT_reg}
		The Lagrangian dual formulation of the discrete $\MOTe$ problem \eqref{eq:MOTe} states 
		\begin{equation}\label{eq:dual_problem}
			\sup_{\substack{\phi^k \in \R_{\ge0}^{n_k}, k \in [K]}}
			\mathcal{S}\left(\phi^1, \cdots, \phi^K\right)
			\coloneqq
			\sup_{\substack{\phi^k \in \R_{\ge0}^{n_k}, k \in [K]}}
			\eta\sum_{k \in [K]}\sum_{j \in [n_k]} \mu^k_{j}\log\phi^k_{j} -\eta \sum_{\bi\in\bI} \Ker_{\bi} \boldsymbol{\Phi}_{\bi},
		\end{equation}
		where the kernel matrix $\Ker \in \R^\bn$ is defined by
		\begin{equation}
			\Ker_{\bi} \coloneqq \exp\left(-\frac{\boldsymbol{C}_{\bi}}{\eta}\right),
			\quad \bi\in\bI,
		\end{equation}
		and the dual tensor $\boldsymbol{\Phi}= \bigotimes_{k=1}^K \phi^k$ by
		\begin{equation}
			\boldsymbol{\Phi}_{\bi} \coloneqq \prod_{k=1}^K \phi_{i_k}^k,
			\quad \bi\in\bI.
		\end{equation}
		The functional $\mathcal{S}\colon \R_{\ge0}^{\bn} \rightarrow \R$ is called the Sinkhorn function.
	\end{theorem}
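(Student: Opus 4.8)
The plan is to obtain \eqref{eq:dual_problem} from \eqref{eq:MOTe} by the textbook Lagrangian recipe: attach multipliers to the $K$ marginal constraints, perform the inner minimization over the plan in closed form using strict convexity of the entropy, reinsert the minimizer, and conclude with an exponential change of variables.

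First I would introduce $\lambda^k\in\R^{n_k}$ for the equality constraints $P_k(\boldsymbol\Pi)=\mu^k$, $k\in[K]$, and form the Lagrangian
$$
L(\boldsymbol\Pi,\lambda)=\langle\boldsymbol\Pi,\bC\rangle_\mathrm{F}+\eta\langle\boldsymbol\Pi,\log\boldsymbol\Pi-\boldsymbol1_\bn\rangle_\mathrm{F}+\sum_{k\in[K]}\bigl\langle\lambda^k,\mu^k-P_k(\boldsymbol\Pi)\bigr\rangle,
$$
retaining only $\boldsymbol\Pi\in\R^\bn_{\ge0}$ as an explicit constraint. From \eqref{eq:Pk} one has $\langle\lambda^k,P_k(\boldsymbol\Pi)\rangle=\sum_{\bi\in\bI}\boldsymbol\Pi_{\bi}\lambda^k_{i_k}$, so $L$ splits into independent scalar problems over the multi-index $\bi$. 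Since $t\mapsto t(\log t-1)$ is strictly convex with derivative $\log t\to-\infty$ as $t\to0^+$, the minimizer of $\boldsymbol\Pi\mapsto L(\boldsymbol\Pi,\lambda)$ over $\R^\bn_{\ge0}$ lies in the open positive orthant, and setting $\partial_{\boldsymbol\Pi_{\bi}}L=0$ gives $\bC_{\bi}+\eta\log\boldsymbol\Pi_{\bi}=\sum_{k\in[K]}\lambda^k_{i_k}$, i.e.
$$
\hat{\boldsymbol\Pi}_{\bi}=\exp\Bigl(-\tfrac{\bC_{\bi}}{\eta}\Bigr)\prod_{k\in[K]}\exp\Bigl(\tfrac{\lambda^k_{i_k}}{\eta}\Bigr)=\Ker_{\bi}\,\boldsymbol\Phi_{\bi},\qquad\phi^k_j\coloneqq\exp(\lambda^k_j/\eta)>0 .
$$

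Next I would substitute $\hat{\boldsymbol\Pi}$ back into $L$. Using $\eta(\log\hat{\boldsymbol\Pi}_{\bi}-1)=\sum_k\lambda^k_{i_k}-\bC_{\bi}-\eta$, the cost term, the entropy term, and the term $-\sum_{\bi}\hat{\boldsymbol\Pi}_{\bi}\sum_k\lambda^k_{i_k}$ collapse to $-\eta\sum_{\bi}\hat{\boldsymbol\Pi}_{\bi}=-\eta\sum_{\bi}\Ker_{\bi}\boldsymbol\Phi_{\bi}$, while $\sum_k\langle\lambda^k,\mu^k\rangle=\eta\sum_{k\in[K]}\sum_{j\in[n_k]}\mu^k_j\log\phi^k_j$. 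Hence $\min_{\boldsymbol\Pi}L(\boldsymbol\Pi,\lambda)=\mathcal S(\phi^1,\dots,\phi^K)$, and taking the supremum over $\lambda^k\in\R^{n_k}$ — equivalently over $\phi^k$ in the positive orthant, since a zero entry of $\phi^k$ paired with $\mu^k_j>0$ sends $\mathcal S$ to $-\infty$ — yields exactly \eqref{eq:dual_problem}. To see that this is the dual with zero duality gap, I would invoke strong duality for the convex program \eqref{eq:MOTe}: all constraints are affine and the product plan $\boldsymbol\Pi=\bigotimes_{k=1}^K\mu^k$ is feasible with finite objective, so the refined Slater condition for affine constraints applies and the primal and dual optimal values coincide; alternatively one may simply quote \cite{beier2021unbalanced}.

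The conceptual content is light; the main obstacle is bookkeeping. One must track the per-index cancellations carefully when $\hat{\boldsymbol\Pi}$ is reinserted, and address the mild degeneracy that if $\mu^k_j=0$ then the marginal constraint forces the whole slice $\{\bi:i_k=j\}$ of the plan to vanish, so that the stationarity argument should strictly be carried out on the product of the index sets where the marginals are positive; this changes nothing in the final formula, since the affected terms are absent from both $\mathcal S$ and the constraint set.
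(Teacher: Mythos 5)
The paper itself offers no proof of Theorem~\ref{dual_form_MOT_reg}: it is stated as a special case of a result in \cite{beier2021unbalanced} for a constant entropy function. Your direct Lagrangian derivation is therefore a reconstruction rather than a comparison, but it is correct and is the argument one would expect the cited reference to carry out in greater generality. The key steps all check out: forming $L(\boldsymbol\Pi,\lambda)$ with multipliers only for the $K$ marginal constraints, noting that $L$ separates in $\bi$, solving the unconstrained scalar minimizations (the $\log t\to-\infty$ barrier keeps the minimizer strictly positive, so the implicit constraint $\boldsymbol\Pi\ge0$ is inactive), and reinserting. The algebra $\bC_{\bi}+\eta(\log\hat{\boldsymbol\Pi}_{\bi}-1)-\sum_k\lambda^k_{i_k}=-\eta$ does collapse the three $\boldsymbol\Pi$-dependent pieces to $-\eta\sum_{\bi}\hat{\boldsymbol\Pi}_{\bi}=-\eta\sum_{\bi}\Ker_{\bi}\boldsymbol\Phi_{\bi}$, and the change of variables $\phi^k_j=\exp(\lambda^k_j/\eta)$ turns $\sum_k\langle\lambda^k,\mu^k\rangle$ into $\eta\sum_k\sum_j\mu^k_j\log\phi^k_j$. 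Your closing remark — that extending the supremum from the open positive orthant to $\R_{\ge0}^{n_k}$ changes nothing, and that vanishing marginal entries are a harmless degeneracy — is exactly the right observation needed to match the statement's stated domain $\phi^k\in\R_{\ge0}^{n_k}$.

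One small calibration: the theorem itself asserts only the \emph{form} of the Lagrangian dual, not that the duality gap is zero; the paper addresses strong duality separately in the remark immediately after the theorem (via lower semi-continuity of $c$, citing Beiglböck--Léonard--Schachermayer). Your Slater-type argument for affine constraints is a valid alternative justification of zero gap in the discrete setting, but it is proving slightly more than the statement claims, and it should really be stated as a separate observation rather than as part of the derivation of \eqref{eq:dual_problem}.
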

	The solutions of the dual and the primal problem are generally not equal. 
	The solution of \eqref{eq:dual_problem} is generally a lower bound to the solution of the primal problem~\eqref{eq:MOTe}. 
	Equality holds if the cost function $c$ is lower semi-continuous, i.e.,
	$
	\liminf c(\boldsymbol{x}) \geq c(\boldsymbol{x_0})
	$
	as $\boldsymbol{x}\rightarrow \boldsymbol{x_0}$
	for every $\boldsymbol{x_0} \in \R^{\bn}$, or Borel measurable and bounded \cite{beiglbock2012}. This is obviously the case for the squared Euclidean norm cost function
	\begin{equation} \label{eq:norm}
		c\colon \R^{\bn}\rightarrow [0,\infty),\quad
		c(\bx)=\sum_{\substack{k_1,k_2 \in [K]\\ k_1\neq k_2}}\norm{x^{k_1}_{i_{k_1}}-x^{k_2}_{i_{k_2}}}_2^2
	\end{equation}
	that we study here.
	
	\begin{proposition}[\cite{elvander2019multimarginal}]
		An optimal plan of the $\MOTe$ problem \eqref{eq:MOTe} is given by
		\begin{equation}
			\boldsymbol{\hat{\Pi}} = \Ker\odot\hat{\boldsymbol{\Phi}},
		\end{equation}
		where $\hat{\boldsymbol{\Phi}}=\bigotimes_{k\in[K]}\hat{\phi}^k$ and $\hat{\phi}^k,$ $k\in [K]$ are the optimal solutions of dual problem \eqref{eq:dual_problem}.
	\end{proposition}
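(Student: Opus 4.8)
The plan is to derive the stated form of the optimal plan from the first-order optimality (Karush–Kuhn–Tucker) conditions of the dual problem \eqref{eq:dual_problem}, together with the known relationship between the dual optimizer and the primal optimizer of a strictly convex program. First I would recall that, since the entropy-regularized objective in \eqref{eq:MOTe} is strictly convex and the feasible set $\Pi(\mu^1,\dots,\mu^K)$ is nonempty and compact, the primal problem has a unique minimizer $\hat{\boldsymbol\Pi}$, and by Theorem~\ref{dual_form_MOT_reg} strong duality holds for the squared Euclidean cost \eqref{eq:norm}. Writing the Lagrangian of \eqref{eq:MOTe} with multipliers $\psi^k\in\R^{n_k}$ for the $K$ marginal constraints $P_k(\boldsymbol\Pi)=\mu^k$, the stationarity condition in the entry $\boldsymbol\Pi_{\bi}$ reads $\boldsymbol C_{\bi}+\eta\log\boldsymbol\Pi_{\bi}-\sum_{k\in[K]}\psi^k_{i_k}=0$, which rearranges to
\begin{equation}
	\hat{\boldsymbol\Pi}_{\bi}
	=
	\exp\!\left(-\frac{\boldsymbol C_{\bi}}{\eta}\right)\prod_{k=1}^K\exp\!\left(\frac{\psi^k_{i_k}}{\eta}\right)
	=
	\Ker_{\bi}\prod_{k=1}^K\hat\phi^k_{i_k},
\end{equation}
upon setting $\hat\phi^k_{i_k}\coloneqq\exp(\psi^k_{i_k}/\eta)\ge0$. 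This is exactly $\hat{\boldsymbol\Pi}=\Ker\odot\hat{\boldsymbol\Phi}$ with $\hat{\boldsymbol\Phi}=\bigotimes_{k\in[K]}\hat\phi^k$.

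Next I would verify that the variables $\hat\phi^k$ obtained this way are precisely the maximizers of the Sinkhorn function $\mathcal S$ in \eqref{eq:dual_problem}. The cleanest route is to observe that the change of variables $\phi^k=\exp(\psi^k/\eta)$ turns the standard Lagrangian dual (the infimum of the Lagrangian over $\boldsymbol\Pi\ge0$, expressed in terms of $\psi^k$) into the functional $\mathcal S(\phi^1,\dots,\phi^K)$ displayed in Theorem~\ref{dual_form_MOT_reg}: the term $\eta\sum_k\sum_j\mu^k_j\log\phi^k_j$ comes from $\sum_k\langle\psi^k,\mu^k\rangle$, and the term $-\eta\sum_{\bi}\Ker_{\bi}\boldsymbol\Phi_{\bi}$ is the value of the minimized Lagrangian. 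Hence a maximizer $(\psi^1,\dots,\psi^K)$ of the dual corresponds bijectively to a maximizer $(\hat\phi^1,\dots,\hat\phi^K)$ of $\mathcal S$, and the stationarity identity above uses exactly this maximizer. Finally, one checks feasibility: taking the $k$-th marginal of $\hat{\boldsymbol\Pi}=\Ker\odot\hat{\boldsymbol\Phi}$ and imposing $P_k(\hat{\boldsymbol\Pi})=\mu^k$ recovers the stationarity condition $\partial\mathcal S/\partial\phi^k=0$, so the two characterizations are consistent, and $\hat{\boldsymbol\Pi}$ is admissible. By strong duality and complementary slackness, this admissible $\hat{\boldsymbol\Pi}$ with the correct gradient structure is the unique primal optimum.

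The main obstacle I anticipate is not the formal manipulation but the justification that strong duality and the KKT correspondence actually hold here — in particular that no constraint qualification issue arises despite the nonnegativity constraints $\boldsymbol\Pi\ge0$ and despite $\Ker$ possibly having very small entries. Since the cited Theorem~\ref{dual_form_MOT_reg} already asserts the dual formulation and the excerpt notes that equality of primal and dual values holds for lower semicontinuous (in particular continuous) cost functions such as \eqref{eq:norm}, I would invoke that result rather than reprove it; the remaining work is then the essentially algebraic passage from the dual optimality conditions to the product form, which is routine once the logarithmic change of variables is in place. I would also remark that positivity of $\Ker_{\bi}=\exp(-\boldsymbol C_{\bi}/\eta)>0$ guarantees $\hat{\boldsymbol\Pi}_{\bi}>0$ whenever all $\hat\phi^k_{i_k}>0$, so the entropy term $\log\hat{\boldsymbol\Pi}$ is well defined at the optimum and the stationarity computation above is valid in the interior of the relevant domain.
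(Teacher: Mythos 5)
The paper does not give a proof of this proposition; it is quoted from \cite{elvander2019multimarginal} and invoked as a known fact. So there is no "paper proof" against which to compare yours. Judged on its own, your argument is the standard and correct one: form the Lagrangian of \eqref{eq:MOTe} with multipliers $\psi^k$ for the linear marginal constraints, exploit that the entropy term drives the primal minimizer into the interior (so the inequality constraint $\boldsymbol\Pi\geq 0$ is inactive and the stationarity condition $\boldsymbol C_{\bi}+\eta\log\boldsymbol\Pi_{\bi}=\sum_k\psi^k_{i_k}$ can be used directly), and substitute $\phi^k=\exp(\psi^k/\eta)$ to pass to the Sinkhorn variables and match the functional $\mathcal S$ in \eqref{eq:dual_problem}. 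This is almost certainly the same route followed in the cited reference.

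Two small points worth tightening. First, your appeal to "interior of the relevant domain" can be made precise using the entropy barrier: the one-sided derivative of $t\mapsto t(\log t-1)$ tends to $-\infty$ as $t\to 0^{+}$, so the unique primal minimizer has $\hat{\boldsymbol\Pi}_{\bi}>0$ for every $\bi$ with $\prod_k\mu^k_{i_k}>0$, which is exactly the set of entries that matter; on the remaining indices both sides of the claimed identity vanish. Second, the sentence about "complementary slackness" is unnecessary and slightly off: with the nonnegativity constraint inactive there is nothing to be slack about, and the conclusion already follows from stationarity plus primal feasibility (which, as you note, is equivalent to $\partial\mathcal S/\partial\phi^k=0$). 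With those cosmetic adjustments, the proposal is a complete and valid proof.
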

	A sequence converging to the optimal dual vectors $\hat{\phi}^k,$ $k\in [K]$ in \eqref{eq:dual_problem} can be iteratively determined by the \emph{Sinkhorn algorithm} \cite{haasler2021multimarginal,Marino2020} 
	presented in Algorithm~\ref{alg:MOT-Sinkhorn-K},
	where we note that line $4$ is obtained by deriving the Sinkhorn function~$\mathcal S$ with respect to $\phi^k,$ $k\in [K].$  
	The complexity of the algorithm mainly comes from the computation of the marginal $P_k(\Ker \odot \boldsymbol{\Phi})$,
	where the projection $P_k$ is defined in \eqref{eq:Pk}.
	In general, the number of operations depends exponentially on $K$.
	\begin{algorithm}[!ht]
		\begin{algorithmic}[1]
			\State \textbf{Input: } Initial values $(\phi^k)^{(0)}\in\R^{n_k},$ $k\in [K],$ regularization parameter $\eta>0,$ threshold $\delta>0$
			\State Set $r\leftarrow 0$
			\Do
			\For {$k=1,\ldots, K$}
			\State Compute $\left(\boldsymbol{\tilde{\Phi}}\right)^{(r+1)}_k\coloneqq\bigotimes_{\ell \in [k-1]}\left(\phi^{\ell}\right)^{(r+1)}\otimes \bigotimes_{{\ell}\in[K]\setminus[k-1]}\left(\phi^{\ell}\right)^{(r)}$
			\State Compute dual vectors
			\begin{align}
				&\left(\phi^k\right)^{(r+1)} \coloneqq \left(\mu^k \odot \left(\phi^k\right)^{(r)}\right)\oslash P_k\left(\Ker\odot\left(\boldsymbol{\tilde{\Phi}}\right)^{(r+1)}_k\right)
			\end{align}
			\State Increment $r\leftarrow r+1$
			\EndFor
			\doWhile{$\abs{\mathcal{S}\left(\left(\phi^1\right)^{(r)},\dots,\left(\phi^K\right)^{(r)}\right)-\mathcal{S}\left(\left(\phi^1\right)^{(r-1)},\dots,\left(\phi^K\right)^{(r-1)}\right)} \geq \delta$}
			\State \Return optimal plan $\hat{\boldsymbol{\Pi}}= \Ker\odot \hat{\boldsymbol{\Phi}}$ and $\hat{\boldsymbol{\Phi}} = \bigotimes_{k\in[K]} \left(\phi^k\right)^{(r+1)}$
		\end{algorithmic}
		\caption{Sinkhorn iterations for the $\MOTe$ problem}
		\label{alg:MOT-Sinkhorn-K}
	\end{algorithm}

	\section{Sparse cost functions}\label{sec:sparse_MOT}
	
	In this section, we take a look at sparsely structured cost functions,
	for which the Sinkhorn algorithm becomes much faster and we overcome the curse of dimensionality. 
	Let $G=\left(\mathcal{V}, \mathcal{E}\right)$ be an undirected graph with vertices $\mathcal{V}$ and edges $\mathcal{E}$.  
	We say that the $\MOTe$ problem has the graph~$G$ structure if $\mathcal V = [K]$ and the cost matrix \eqref{eq:C} decouples according to
	\begin{equation} \label{eq:cost_sparse}
		\boldsymbol{C}_{\bi} 
		=
		\sum_{\{k_1,k_2\}\in \mathcal{E}} 
		\big\|{x^{k_1}_{i_{k_1}}- x^{k_2}_{i_{k_2}}}\big\|_2^2
		,\quad \bi = (i_1,\dots,i_K)\in \bI.
	\end{equation}
	This implies that the kernel $\Ker\in \R_{\ge0}^\bn$ satisfies
	\begin{equation} \label{eq:K-sparse}
		\Ker_{\bi} 
		= \exp\left(-\frac{\boldsymbol{C}_{\bi}}{\eta}\right) 
		= \prod_{\{k_1,k_2\}\in\mathcal E}\Ker_{i_{k_1}, i_{k_2}}^{(k_1,k_2)}
		,\quad \bi\in\bI,
	\end{equation}
	where the kernel matrix $\Ker^{(k_1,k_2)} \in \R_{\ge0}^{n_{k_1}\times n_{k_2}}$ for $\{k_1,k_2\}\in\mathcal E$ is given by
	\begin{equation} \label{eq:K-matrix}
		\Ker_{i_{k_1}, i_{k_2}}^{(k_1,k_2)} \coloneqq
		\exp\left(-\frac{1}{\eta}{\big\|{\bx^{k_1}_{i_{k_1}}-\bx^{k_2}_{i_{k_2}}}\big\|_2^2 }\right) . 
	\end{equation}
	We use the indices $k \in \mathcal{V}=[K]$ to identify the marginal measures $\mu^k$ in the rest of the paper. 
	
	The discrete, dual formulation \eqref{eq:dual_problem} of the $\MOTe$ problem has the same form independently of the structure of the graph $G$, only the marginals $P_k\left(\Ker \odot \boldsymbol{\Phi}\right)$ differ.
	If the graph $G$ is complete, i.e., each two of the vertices in $\mathcal{V}$ are connected with an edge, 
	then the computational complexity of the Sinkhorn Algorithm \ref{alg:MOT-Sinkhorn-K} depends exponentially on the number $K$ of marginal measures (vertices). 
	For larger values of $K$, it is practically impossible to numerically compute an optimal plan to the $\MOTe$ problem. 
	{We} consider two sparsity assumptions of $\MOTe$ problem,
	each of them yielding that the Sinkhorn algorithm has a linear complexity in the number $K$ of nodes. It was shown in \cite{AltschulerBoix-AdseraPolynomial} that $\MOTe$ problems with graphically structured cost functions of constant tree-width can be implemented in polynomial time. This is the case for the tree and circle, whose tree-widths are $1$ and $2$, respectively.
	In the following, we give an explicit scheme to efficiently compute the Sinkhorn iterations for tree and circle structure.

	\subsection{Tree structure}
	\label{sec:tree}
	
	We consider the $\MOTe$ problem with the structure of a
	\emph{tree} $\left(\mathcal{V}, \mathcal{E}\right)$,
	which is a connected and circle-free graph with $\abs{\mathcal{E}}=\abs{\mathcal{V}}-1.$ 
	We define the (non-empty) \emph{neighbour} set $\mathcal{N}_k$ of $k\in\mathcal{V}$ as the set of all nodes $\ell\in \mathcal{V}$ such that $\{k,\ell\}\in \mathcal{E}.$ Furthermore we denote by $\mathcal{L}\coloneqq \{k \in \mathcal{V}: |\mathcal N_k|=1\}$ the set of all \emph{leaves} of the tree.
	
	We call  $1\in\mathcal V$ the \emph{root} of the tree.
	For every $k\in\mathcal V$, there is a unique path between $k$ and the root.
	For $k\in\mathcal V\setminus\set{1}$,
	we define the \emph{parent} $p(k)$ as the node in $\mathcal N_k$ such that $p(k)$ lies in the path between $k$ and the root $1$.
	The root has no parent.
	Without loss of generality,
	we assume that $p(k)<k$ holds for all $k\in\mathcal V\setminus\set{1}$.
	We define the set of \emph{children} 
	$\mathcal C_k \coloneqq \{\ell\in \mathcal N_k : \ell > k\}$. Then, we can derive a recursive formula for the $k$-th marginal of the tensor $\Ker\odot\boldsymbol\Phi \in \R^{\bn}.$
	
	\begin{theorem}
		\label{thm:marginal_tree}
		Let $(\mathcal{V}, \mathcal{E})$ be a tree with leaves $\mathcal L$ and $c$ be the $\MOTe$ cost function associated. Let furthermore $k\in \mathcal{V}$ be an arbitrary node. Then the $k$-th marginal of the transport plan $\Ker\odot \boldsymbol{\Phi}$ is given by
		\begin{equation}
			P_k\left(\Ker \odot \boldsymbol{\Phi}\right) = \phi^k \odot \bigodot_{\ell \in \mathcal{N}_k} \alpha^{(k,\ell)},
		\end{equation}
		where the vectors $\alpha^{(k,\ell)}\in\R^{n_k}$ are recursively defined as 
		\begin{equation*}
			\alpha^{(k,\ell)} = 
			\begin{cases}\Ker^{(k,\ell)}\phi^{\ell}  &\text{if } \ell\in \mathcal{L},\\
				\Ker^{(k,\ell)}\left(\phi^{\ell} \odot \bigodot_{t \in \mathcal{N}_{\ell}\setminus\set{k}} \alpha^{(\ell,t)}\right) & \text{otherwise}.
			\end{cases}
		\end{equation*}
	\end{theorem}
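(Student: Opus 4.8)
The plan is to prove the formula by induction on the structure of the tree, building up the marginal from the leaves towards the node $k$. The key observation is that the kernel factorizes along edges as in \eqref{eq:K-sparse}, so that $\Ker \odot \boldsymbol{\Phi}$ is a product of edge factors $\Ker^{(k_1,k_2)}_{i_{k_1},i_{k_2}}$ together with the node factors $\phi^k_{i_k}$. Since the tree is connected and circle-free, removing the node $k$ splits the remaining vertices into $|\mathcal N_k|$ disjoint subtrees, one for each neighbour $\ell\in\mathcal N_k$; call the vertex set of the subtree containing $\ell$ by $\mathcal T_{k,\ell}$. Summing over all indices $i_m$ with $m\neq k$ therefore factors into a product over $\ell\in\mathcal N_k$ of partial sums, each over the indices $\{i_m : m\in\mathcal T_{k,\ell}\}$, because no edge of $\mathcal E$ connects two different subtrees (any such edge would create a cycle through $k$). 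This is the combinatorial heart of the argument.

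First I would fix $k$ and write
\[
P_k(\Ker\odot\boldsymbol\Phi)_{i_k}
= \phi^k_{i_k} \sum_{(i_m)_{m\neq k}} \;\prod_{\{k_1,k_2\}\in\mathcal E} \Ker^{(k_1,k_2)}_{i_{k_1},i_{k_2}} \prod_{m\neq k}\phi^m_{i_m},
\]
and then use the subtree decomposition to rewrite the big sum as $\prod_{\ell\in\mathcal N_k}\alpha^{(k,\ell)}_{i_k}$, where $\alpha^{(k,\ell)}_{i_k}$ collects exactly the edge $\{k,\ell\}$ together with all edges and node factors lying in $\mathcal T_{k,\ell}$. This gives the claimed product form $P_k(\Ker\odot\boldsymbol\Phi) = \phi^k\odot\bigodot_{\ell\in\mathcal N_k}\alpha^{(k,\ell)}$ once one checks that $\alpha^{(k,\ell)}$ is well defined by the stated recursion.

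For the recursion I would argue by induction on the size of the subtree $\mathcal T_{k,\ell}$, rooted at $\ell$ with $k$ as its parent. If $\ell\in\mathcal L$ is a leaf, then $\mathcal T_{k,\ell}=\{\ell\}$ contributes only the edge factor $\Ker^{(k,\ell)}_{i_k,i_\ell}$ and the node factor $\phi^\ell_{i_\ell}$, so summing over $i_\ell$ gives $\alpha^{(k,\ell)} = \Ker^{(k,\ell)}\phi^\ell$. Otherwise, the children of $\ell$ in this rooted subtree are exactly $\mathcal N_\ell\setminus\{k\}$, and the subtrees hanging off them are again disjoint and edge-separated; applying the induction hypothesis to each $t\in\mathcal N_\ell\setminus\{k\}$ yields the partial sum over $\mathcal T_{\ell,t}$ as $\alpha^{(\ell,t)}_{i_\ell}$, so that summing the remaining index $i_\ell$ against $\Ker^{(k,\ell)}_{i_k,i_\ell}\phi^\ell_{i_\ell}\prod_{t}\alpha^{(\ell,t)}_{i_\ell}$ produces precisely $\Ker^{(k,\ell)}(\phi^\ell\odot\bigodot_{t\in\mathcal N_\ell\setminus\{k\}}\alpha^{(\ell,t)})$. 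The induction terminates because every leaf of the original tree is reached, and the ordering assumption $p(k)<k$ is not actually needed for correctness here — it only fixes a convenient traversal order.

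The main obstacle is making the disjointness/edge-separation claim fully rigorous: one must verify that for a tree, deleting $k$ yields components in bijection with $\mathcal N_k$, that each component together with $k$ spans a subtree with $k$ as a leaf, and crucially that \emph{every} edge of $\mathcal E\setminus\{\{k,\ell\}:\ell\in\mathcal N_k\}$ lies inside a single component — this last point is where the acyclicity of the tree is used, and it is what licenses factoring the multi-index sum into a product. Once that is in place, the rest is bookkeeping: matching each edge and each node to exactly one $\alpha$-term and invoking the distributive law $\sum\prod = \prod\sum$ over the disjoint groups of indices.
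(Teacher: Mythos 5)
Your proposal is correct and follows essentially the same approach as the paper's (very brief) proof sketch: split the tree at the node $k$ into $|\mathcal N_k|$ subtrees, factorize the kernel \eqref{eq:K-sparse} along those subtrees, and let the multi-index sum in $P_k$ distribute into a product of subtree sums, with the recursion for $\alpha^{(k,\ell)}$ following by induction on subtree size. Your version is more explicit about the disjointness/edge-separation argument and the inductive structure, but it is a fleshed-out account of the same decomposition the paper attributes to Haasler et al.
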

	
	A proof of Theorem \ref{thm:marginal_tree} can be found in \cite[Theorem~3.2]{haasler2021multimarginal}.
	The main idea is to split the rooted tree at the node $k$ into $\abs{\mathcal{N}_k}$ subtrees. Therefore, the kernel matrix holds
	\begin{equation}
		\Ker_{\bi}=\prod_{\{k_1,k_2\}\in \mathcal{E}}\Ker^{(k_1, k_2)}_{i_{k_1}, i_{k_2}} =\prod_{\ell\in\mathcal N_k} \prod_{t\in \mathcal{D_{\ell}}}\Ker^{(p(t), t)}_{i_{p(t)}, i_{t}}, 
		\quad \bi\in\bI,
	\end{equation}
	where $\mathcal{D_{\ell}}$ is the set of descendants of $\ell,$ i.e., the nodes $t\in \mathcal{V}$ such that $\ell$ lies in the path between $k$ and $t.$
	Inserting $\Ker_{\bi}$ into {$P_k\left(\Ker \odot \boldsymbol{\Phi}\right)$ and recalling the definition of $P_k$ in \eqref{eq:Pk}} yields the result.
	
	In order to efficiently perform the Sinkhorn algorithm, we compute iteratively for $\ell=K,\dots,2$
	the vectors $\beta_{\ell}\coloneqq \alpha^{(p(\ell),\ell)}\in \R^{n_{p(\ell)}}.$ From Theorem~\ref{thm:marginal_tree}, we obtain
	\begin{align}
		&\beta_{\ell} =
		\begin{cases}
			\Ker^{(p(\ell),\ell)}\phi^{\ell} &\text{if } \ell\in \mathcal{L},\\
			\Ker^{(p(\ell),\ell)}\left(\phi^{\ell}\odot \bigodot_{t \in \mathcal{C}_{\ell}}\beta_t\right)& \text{otherwise}.
		\end{cases}
	\end{align}
	Since we assumed that $p(k)<k$, the computation of $\beta_\ell$ requires only $\beta_t$ for $t>\ell$.
	Similarly, the vectors $\gamma_\ell \coloneqq \alpha^{(\ell,p(\ell))} \in \R^{n_\ell}$ for $\ell>1$ and $\gamma_1 \coloneqq\mathbf{1}_{n_1}$ can be iteratively computed by
	\begin{equation}\label{eq:gamma_tree}
		\gamma_\ell = \left(\Ker^{(p(\ell),\ell)}\right)^{\intercal}\left(\phi^{p(\ell)}\odot \gamma_{p(\ell)}\odot \bigodot_{t\in \mathcal{C}_{p(\ell)}\setminus\set{\ell}} \beta_t\right),
		\quad \ell=2,\dots,K,
	\end{equation}
	where the vectors $\beta_t\in \R^{n_{p(\ell)}}$ are assumed to be known from above. 
	Then the marginals are
	\begin{equation}
		P_k\left(\Ker\odot \boldsymbol{\Phi}\right) =\phi^k \odot \gamma_k \odot \beta^{\odot_{C_k}},
		\quad k\in[K],
	\end{equation}
	where for any subset $\mathcal{U}\subseteq \mathcal{C}_k,$ we define 
	\begin{equation}\label{eq:comp_wise_multi_beta_tree}
		\R^{n_k} \ni
		\beta^{\odot_{\mathcal{U}}}
		\coloneqq
		\begin{cases}
			\mathbf{1}_{n_k} & \text{ if } \mathcal{U}=\emptyset,\\
			\bigodot_{t\in \mathcal{U}}\beta_t & \text{ otherwise.}
		\end{cases}
	\end{equation} 
	The resulting procedure is summarized in Algorithm~\ref{alg:sink-tree}.
	
	\begin{algorithm}[ht]
		\caption{Sinkhorn algorithm for tree structure\label{alg:sink-tree}}
		\begin{algorithmic}[1]
			\State \textbf{Input:} Tree $T(\mathcal V,\mathcal E)$ with leaves $\mathcal L$ and root $1\in\mathcal V=[K]$, initialization $(\phi^k)^{(0)}$, ${k\in [K]},$ parameters $\eta, \delta>0$
			\State Initialize $r\gets0$
			\For{$k=K,\dots,2$}
			\State
			$\displaystyle
			\beta_{k}^{(0)} \coloneqq 
			\begin{cases}\Ker^{(p(k),k)}\left(\phi^k\right)^{(0)} &\text{if } k\in \mathcal{L},\\
				\Ker^{(p(k),k)}\left(\left(\phi^k\right)^{(0)} \odot \left(\beta^{\odot_{\mathcal{C}_k}}\right)^{(0)}\right) &\text{otherwise}
			\end{cases}
			$
			\EndFor
			\Do
			\For{$k=1,\dots,K$}
			\State $\displaystyle
			\gamma_k^{(r)} \coloneqq
			\begin{cases}
				\mathbf{1} &\text{if } k=1,\\ \left(\Ker^{(p(k),k)}\right)^{\intercal}\left(\left(\phi^{p(k)}\right)^{(r+1)}\odot \gamma_{p(k)}^{(r)}\odot \left(\beta^{\odot_{\mathcal{C}_{p(k)}\setminus\set{k}}}\right)^{(r)}\right)
				&\text{otherwise}
			\end{cases}
			$
			\State Compute the dual vector
			$
			\left(\phi^k\right)^{(r+1)} \coloneqq {\mu^k} \oslash \left({\gamma_k^{(r)} \odot \left(\beta^{\odot_{\mathcal{C}_{k}}}\right)^{(r)}}\right)
			$
			\EndFor
			\For{$k=K,\dots,2$}
			\State Compute $\beta_k^{(r+1)}$ according to step 4.
			\EndFor
			\State Set
			$\displaystyle
			\mathcal{S}^{(r)} \coloneqq {\eta}\left(\sum_{k=1}^K\left(\mu^k\right)^{\intercal}\log\left(\phi^k\right)^{(r)}-\left({\mu^1}\oslash{\left(\phi^1\right)^{(r+1)}}\right)^{\intercal}\left(\phi^1\right)^{(r)}\right)
			$
			\State Increment $r\gets r+1$
			\doWhile{$|{\mathcal{S}^{(r)}-\mathcal{S}^{(r-1)}}| \geq \delta$}
			\State \Return optimal plan $\hat{\boldsymbol{\Pi}}= \Ker\odot \hat{\boldsymbol{\Phi}}$, where $\hat{\boldsymbol{\Phi}} = \bigotimes_{k\in [K]} \left(\phi^k\right)^{(r)}$
		\end{algorithmic}
	\end{algorithm}

	\subsection{Circle structure}
	
	We consider the $\MOTe$ problem where the graph $(\mathcal{V}, \mathcal{E})$ is a circle.
	We assume for each $k\in \mathcal{V}=[K]$ that $ \{k,k+1\}\in \mathcal{E}$ is an edge of the circle, where we set $k+1=1$ if $k=K$ and $k-1=K,$ if $k=1.$
	Thus we can define the distance between two nodes $k_1, k_2\in\mathcal V$ as
	\begin{equation}
		d(k_1,k_2)
		\coloneqq
		\begin{cases}
			k_2-k_1 & \text{if } k_2\geq k_1,\\
			K-k_1+k_2 &\text{otherwise.}
		\end{cases}
	\end{equation}
	
	\begin{theorem}\label{prop:marginal_circle}
		Let $(\mathcal{V}, \mathcal{E})$ be a circle and $c$ be the MOT cost function associated. Let furthermore $k\in \mathcal{V}$ be an arbitrary node. The $k$-th marginal of the transport plan $\Ker\odot \boldsymbol{\Phi}$ is given by
		\begin{equation}
			P_k\left(\Ker\odot \boldsymbol{\Phi}\right) = \left(\left(\phi^k\odot\Ker^{(k, k+1)}\right)\odot\left(\phi^{k+1}\odot\alpha^{(k+1, k)}\right)^{\intercal}\right) \boldsymbol{1}_{k+1},
		\end{equation}
		where the matrices $\alpha^{(\ell,t)}\in \R^{n_{\ell}\times n_t}$, $\ell, t\in \mathcal{V}$ recursively satisfy
		\begin{equation} \label{eq:alpha_circle}
			\alpha^{(\ell,t)}= 
			\begin{cases}
				\Ker^{(\ell, t)} &\text{if } \text{d}(\ell,t)=1,\\
				\Ker^{(\ell, \ell+1)}\left( \phi^{\ell+1}\odot\alpha^{(\ell+1,t)}\right) & \text{otherwise,}	
			\end{cases}	
		\end{equation}
		and we set $k+1=1$ if $k=K$  and $k-1=N,$ if $k=1.$
	\end{theorem}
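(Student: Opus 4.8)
The plan is to expand the marginal $P_k(\Ker\odot\boldsymbol{\Phi})$ directly from the circle factorisation \eqref{eq:K-sparse} of the kernel and to recognise the resulting nested sums as the recursion \eqref{eq:alpha_circle}. All node indices, sums and products below are understood cyclically, with $K+1\equiv 1$. First I would prove, by induction on the distance $d(\ell,t)\ge 1$, the combinatorial identity
\begin{equation*}
	\alpha^{(\ell,t)}_{i_\ell,i_t}
	=\sum_{i_{\ell+1},\dots,i_{t-1}}
	\Bigl(\prod_{j=\ell}^{t-1}\Ker^{(j,j+1)}_{i_j,i_{j+1}}\Bigr)
	\Bigl(\prod_{j=\ell+1}^{t-1}\phi^j_{i_j}\Bigr),
	\qquad \ell,t\in\mathcal V .
\end{equation*}
The recursion is well founded since the forward step $\ell\mapsto\ell+1$ in \eqref{eq:alpha_circle} lowers $d(\ell,t)$ by one, down to the base case $d(\ell,t)=1$, i.e.\ $t=\ell+1$, where the sum over $i_{\ell+1},\dots,i_{t-1}$ is empty and $\alpha^{(\ell,\ell+1)}=\Ker^{(\ell,\ell+1)}$. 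For $d(\ell,t)\ge 2$ the defining relation $\alpha^{(\ell,t)}=\Ker^{(\ell,\ell+1)}\bigl(\phi^{\ell+1}\odot\alpha^{(\ell+1,t)}\bigr)$ reads entrywise as $\alpha^{(\ell,t)}_{i_\ell,i_t}=\sum_{i_{\ell+1}}\Ker^{(\ell,\ell+1)}_{i_\ell,i_{\ell+1}}\phi^{\ell+1}_{i_{\ell+1}}\alpha^{(\ell+1,t)}_{i_{\ell+1},i_t}$; inserting the induction hypothesis for $\alpha^{(\ell+1,t)}$ and absorbing the additional edge $\Ker^{(\ell,\ell+1)}$ and the additional factor $\phi^{\ell+1}$ into the two products gives the identity.

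Next I would expand the marginal itself. By \eqref{eq:Pk}, the definition of $\boldsymbol\Phi$ and the circle factorisation \eqref{eq:K-sparse},
\begin{equation*}
	P_k(\Ker\odot\boldsymbol{\Phi})_{i_k}
	=\sum_{i_\ell,\ \ell\ne k}
	\Bigl(\prod_{j=1}^{K}\Ker^{(j,j+1)}_{i_j,i_{j+1}}\Bigr)
	\Bigl(\prod_{j=1}^{K}\phi^j_{i_j}\Bigr).
\end{equation*}
I would split the $K$ edges into the single edge $\{k,k+1\}$ and the path $k+1,k+2,\dots,k-1,k$ carrying the remaining $K-1$ edges, pull the factor $\phi^k_{i_k}$ out of the sum, and perform the summation over $i_{k+2},\dots,i_{k-1}$ first. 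Applying the identity of the first step with $\ell=k+1$ and $t=k$ (so that the interior nodes are exactly $k+2,\dots,k-1$ and carry the factors $\phi^{k+2},\dots,\phi^{k-1}$), that inner sum equals $\alpha^{(k+1,k)}_{i_{k+1},i_k}$, leaving
\begin{equation*}
	P_k(\Ker\odot\boldsymbol{\Phi})_{i_k}
	=\phi^k_{i_k}\sum_{i_{k+1}}\Ker^{(k,k+1)}_{i_k,i_{k+1}}\,\phi^{k+1}_{i_{k+1}}\,\alpha^{(k+1,k)}_{i_{k+1},i_k}.
\end{equation*}

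Finally I would translate this scalar identity back into matrix form: the $(i_k,i_{k+1})$ entry of $\phi^k\odot\Ker^{(k,k+1)}$ is $\phi^k_{i_k}\Ker^{(k,k+1)}_{i_k,i_{k+1}}$, the $(i_k,i_{k+1})$ entry of $\bigl(\phi^{k+1}\odot\alpha^{(k+1,k)}\bigr)^{\intercal}$ is $\phi^{k+1}_{i_{k+1}}\alpha^{(k+1,k)}_{i_{k+1},i_k}$, so their Hadamard product has $(i_k,i_{k+1})$ entry equal to the summand above, and right multiplication by $\boldsymbol{1}_{k+1}$ performs the sum over $i_{k+1}$; this is precisely the claimed expression for $P_k(\Ker\odot\boldsymbol{\Phi})$. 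I expect the only real obstacle to be the cyclic bookkeeping — in particular keeping straight which nodes contribute a $\phi$-factor inside $\alpha^{(k+1,k)}$ versus the two factors $\phi^k$ and $\phi^{k+1}$ displayed explicitly — but once the indexing convention of the first step is fixed, every step is a routine manipulation of sums and products, entirely parallel to the tree case of Theorem~\ref{thm:marginal_tree}.
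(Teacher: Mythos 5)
Your proposal is correct and follows essentially the same route as the paper's proof: both expand $P_k(\Ker\odot\boldsymbol\Phi)$ using the circle factorisation \eqref{eq:K-sparse} into a nested sum, and then recognise the interior sums as the recursion \eqref{eq:alpha_circle}. The only stylistic difference is that you state the closed-form expansion of $\alpha^{(\ell,t)}$ as an explicit induction lemma and then split off the single edge $\{k,k+1\}$ and factor $\phi^k$, whereas the paper performs a cyclic re-indexing (setting $\psi^\ell=\phi^{(k+\ell-1)\bmod K}$ and $\tilde\Ker^{(\ell,\ell+1)}=\Ker^{((k+\ell-1)\bmod K,(k+\ell)\bmod K)}$) to reduce to the case $k=1$ and then unwinds the recursion on the shifted quantities $\tilde\alpha^{(\ell,1)}$; the underlying bookkeeping is identical, and your explicit induction on $d(\ell,t)$ is, if anything, a slightly tidier way to package it.
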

	
	\begin{proof}
		The kernel $\Ker$ can be decomposed as \eqref{eq:K-sparse}.
		Let $k\in[K]$ and $i_k \in [n_k]$.
		It holds then
		\begin{align}
			[P_k\left(\Ker \odot \boldsymbol{\Phi} \right)]_{i_k} 
			&= \sum_{\ell\in \mathcal{V}\setminus{\set{k}}}\sum_{i_{\ell} \in [n_{\ell}]} \Ker_{\bi} \odot \boldsymbol{\Phi}_{\bi}  
			=\phi^k_{i_k}\sum_{\ell\in \mathcal{V}\setminus{\set{k}}}\sum_{i_{\ell} \in [n_{\ell}]}\prod_{\{k_1, k_2\}\in \mathcal{E}} \Ker^{(k_1, k_2)}_{i_{k_1}, i_{k_2}}\prod_{j\in \mathcal{V}\setminus\set{k}} \phi^{j}_{i_j}\\
			&= \phi^k_{i_k}\sum_{i_1}\phi^{1}_{i_{1}}\sum_{i_2}\Ker^{(1, 2)}_{i_1, i_2}\phi^{2}_{i_2}\cdots\sum_{i_{k-2}}\Ker^{(k-3, k-2)}_{i_{k-3}, i_{k-2}}\phi^{k-2}_{i_{k-2}} \sum_{i_{k-1}}\Ker^{(k-2, k-1)}_{i_{k-2}, i_{k-1}}\phi^{k-1}_{i_{k-1}}\Ker^{(k-1, k)}_{i_{k-1}, i_{k}}\\
			&\hspace{0.9cm}\sum_{i_{k+1}}\Ker^{(k, k+1)}_{i_{k}, i_{k+1}}\phi^{k+1}_{i_{k+1}}\cdots \sum_{i_K}\Ker^{(K-1, K)}_{i_{K-1}, i_{K}}\phi^{K}_{i_{K}}\Ker^{(K, 1)}_{i_{K}, i_{1}}\\
			&= \phi^k_{i_k}\sum_{i_{k+1}}\Ker^{(k, k+1)}_{i_{k}, i_{k+1}}\phi^{k+1}_{i_{k+1}}\cdots \sum_{i_K}\Ker^{(K-1, K)}_{i_{K-1}, i_{K}}\phi^{K}_{i_{K}}\sum_{i_1}\Ker^{(K, 1)}_{i_{K}, i_{1}}\phi^{1}_{i_{1}}\sum_{i_2}\Ker^{(1, 2)}_{i_{1}, i_{2}}\phi^{2}_{i_{2}}\cdots\\
			&\hspace{0.9cm}\sum_{i_{k-2}}\Ker^{(k-3, k-2)}_{i_{k-3}, i_{k+2}}\phi^{k-2}_{i_{k-2}}\sum_{i_{k-1}}\Ker^{(k-2, k-1)}_{i_{k-2}, i_{k-1}}\phi^{k-1}_{i_{k-1}} \Ker^{(k-1, k)}_{i_{k-1}, i_{k}}
			.
		\end{align}    
		Setting $\psi^\ell \coloneqq \phi^{(k+\ell-1)\text{mod }K}$ and $\tilde{\Ker}^{(j,\ell+1)}\coloneqq \Ker^{\left((k+\ell-1)\text{mod }K, (k+\ell)\text{mod }K\right)}$ for every $ \ell\in[K]$, 
		we obtain
		\begin{align}
			[P_k\left(\Ker \odot \boldsymbol{\Phi} \right)]_{i_k} 
			&= \phi^k_{i_k}\sum_{j_2}\tilde{\Ker}^{(1, 2)}_{i_k, j_2}\psi^{2}_{j_2}
			\cdots\sum_{j_{K-1}}\tilde{\Ker}^{(K-2, K-1)}_{j_{K-2}, j_{K-1}}\psi^{K-1}_{j_{K-1}} \sum_{j_K}\tilde{\Ker}^{(K-1, K)}_{j_{K-1}, j_{K}}\psi^{K}_{j_{K}}\tilde{\Ker}^{(K, 1)}_{j_{K}, i_k}
			\\&
			= [P_1\left(\tilde{\Ker} \odot \tilde{\boldsymbol{\Phi}} \right)]_{i_k}.
		\end{align}
		We define 
		\begin{equation}
			\tilde{\alpha}^{(K,1)} \coloneqq\tilde{\Ker}^{(K,1)} = \alpha^{(k-1,k)} \in \R^{n_{k-1}\times n_k}
		\end{equation}
		and recursively for all $\ell=K-1,\dots,1$ the matrix $\tilde\alpha^{(\ell,1)}\in\R^{((k+\ell-1)\text{mod }K)\times n_k}$ by
		\begin{equation}
			\tilde{\alpha}^{(\ell,1)}_{j_{l},i_k}\coloneqq \big[\tilde{\Ker}^{(\ell, \ell+1)}\left(\psi^{\ell+1}\odot \tilde{\alpha}^{(\ell+1, 1)}\right)\big]_{j_{l}, i_k} =\sum_{j_{\ell+1}}\tilde{\Ker}^{(\ell, \ell+1)}_{j_{\ell}, j_{\ell+1}}{\psi}^{\ell+1}_{j_{\ell+1}} \tilde{\alpha}^{(\ell+1, 1)}_{j_{\ell+1}, i_k}
			.
		\end{equation}
		Inserting this into the marginal yields
		\begin{align}
			[P_1\left(\tilde{\Ker} \odot \tilde{\boldsymbol{\Phi}} \right)]_{i_k} &= \phi^k_{i_k}\sum_{j_2}\tilde{\Ker}^{(1, 2)}_{i_k, j_2}\psi^{2}_{j_2}
			\cdots\sum_{j_{K-2}}\tilde{\Ker}^{(K-2, K-1)}_{j_{K-2}, j_{K-1}}\psi^{K-1}_{j_{K-1}}
			\sum_{j_{K-1}}\tilde\Ker^{(K-2, K-1)}_{j_{K-2}, j_{K-1}}\psi^{K-1}_{j_{K-1}}\tilde\alpha^{(K-1,1)}_{j_{K-1}, i_k}
			\\
			&= \psi^1_{i_k}\sum_{j_2}\tilde{\Ker}^{(1, 2)}_{i_k, j_2}\psi^{2}_{j_2}\tilde{\alpha}^{(2,1)}_{j_2,i_k} =  \psi^1_{i_k}{\big[\tilde\Ker}^{(1, 2)}_{i_k,j_2}\big]_{j_2\in [n_{k+1}]}^{\intercal}\left(\psi^{2}\odot\big[\tilde{\alpha}^{(2,1)}_{j_2, i_k}\big]_{j_2\in [n_{k+1}]}\right).
		\end{align}
		Henceforth,
		\begin{equation}
			P_1\left(\tilde{\Ker} \odot \tilde{\boldsymbol{\Phi}} \right) =  \left(\left(\psi^1\odot\tilde{\Ker}^{(1, 2)}\right)\odot\left(\psi^{2}\odot\tilde{\alpha}^{(2,1)}\right)^{\intercal}\right) \boldsymbol{1}.
		\end{equation}
		Finally defining $\tilde{\alpha}^{(\ell,1)}=\alpha^{((k+\ell)\text{mod }K,k)}$ yields the assumption.
	\end{proof}
	To efficiently compute the marginal optimal transport 
	as for the tree structure, we choose $k=1$ as starting point and decompose the marginal into two matrices, which can be computed recursively as follows.
	For matrices  $\boldsymbol G,\boldsymbol H \in \R^{n\times m},$ we define the inner product with respect to the second dimension by
	\begin{equation}
		\langle \boldsymbol G, \boldsymbol H\rangle \coloneqq \left(\sum_{j=1}^m  G_{ij} H_{ij}\right)_{i\in [n]} \in \R^n.
	\end{equation}
	\begin{theorem}\label{prop:marg_new_circ}
		Under the assumptions of Theorem~\ref{prop:marginal_circle}, we have
		\begin{equation}
			P_k\left(\Ker \odot \boldsymbol{\Phi}\right) = 
			\begin{cases}
				\phi^1\odot \left\langle \Ker^{(1,2)},\left(\phi^2\odot\alpha^{(2,1)}\right)^{\intercal}\right\rangle, & k=1,\\ 
				\phi^k\odot\left\langle \alpha^{(k,1)}, \left(\phi^1\odot \lambda^{(1,k)}\right)^{\intercal}\right\rangle, &k=2,\dots,K,
			\end{cases}
		\end{equation}
		where $\alpha^{(k,1)}$ is given in \eqref{eq:alpha_circle} and 
		\begin{equation} \label{eq:eta}
			\lambda^{(1,k)} \coloneqq
			\begin{cases}
				\Ker^{(1,2)}, & k=2,\\ \lambda^{(1,k-1)}\left(\phi^{(k-1)}\odot\Ker^{(k-1,k)}\right), & k=3,\dots,K.
			\end{cases}
		\end{equation}
	\end{theorem}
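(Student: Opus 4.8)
The plan is to reduce the statement to the nested-sum formula for a single component of the marginal that was obtained inside the proof of Theorem~\ref{prop:marginal_circle}, and then to recognise the products over the two arcs of the circle joining node~$1$ and node~$k$ as the matrices $\alpha^{(k,1)}$ and $\lambda^{(1,k)}$. The case $k=1$ is immediate: taking $k=1$ in Theorem~\ref{prop:marginal_circle} gives $P_1(\Ker\odot\boldsymbol{\Phi})=\big((\phi^1\odot\Ker^{(1,2)})\odot(\phi^2\odot\alpha^{(2,1)})^{\intercal}\big)\boldsymbol{1}$, and writing out the $i_1$-th component the scalar $\phi^1_{i_1}$ factors out, leaving $\sum_{i_2}\Ker^{(1,2)}_{i_1,i_2}(\phi^2\odot\alpha^{(2,1)})^{\intercal}_{i_1,i_2}$, which is by definition the $i_1$-th entry of $\langle\Ker^{(1,2)},(\phi^2\odot\alpha^{(2,1)})^{\intercal}\rangle$; hence $P_1(\Ker\odot\boldsymbol{\Phi})=\phi^1\odot\langle\Ker^{(1,2)},(\phi^2\odot\alpha^{(2,1)})^{\intercal}\rangle$.

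For $2\le k\le K$ I would start from the expression for $[P_k(\Ker\odot\boldsymbol{\Phi})]_{i_k}$ derived in the proof of Theorem~\ref{prop:marginal_circle} just before the cyclic relabelling $\psi^\ell,\tilde{\Ker}^{(\ell,\ell+1)}$, in which the kernels appear in the cyclic order $\Ker^{(k,k+1)},\dots,\Ker^{(K-1,K)},\Ker^{(K,1)},\Ker^{(1,2)},\dots,\Ker^{(k-1,k)}$, interleaved with the weights $\phi^{k+1},\dots,\phi^K,\phi^1,\phi^2,\dots,\phi^{k-1}$ and carrying the overall prefactor $\phi^k_{i_k}$. Two bookkeeping inductions then identify the partial products. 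Summing out $i_{k+1},\dots,i_K$ against $\Ker^{(k,k+1)},\dots,\Ker^{(K-1,K)},\Ker^{(K,1)}$ and $\phi^{k+1},\dots,\phi^K$ produces exactly $\alpha^{(k,1)}_{i_k,i_1}$, which one checks against the recursion \eqref{eq:alpha_circle} by downward induction on $k$ (equivalently on $d(k,1)$) starting from the base case $k=K$, $\alpha^{(K,1)}=\Ker^{(K,1)}$. Symmetrically, summing out $i_2,\dots,i_{k-1}$ against $\Ker^{(1,2)},\dots,\Ker^{(k-2,k-1)}$ and $\phi^2,\dots,\phi^{k-1}$, while keeping the free index $i_k$ inside $\Ker^{(k-1,k)}$, produces $\lambda^{(1,k)}_{i_1,i_k}$, which one checks against \eqref{eq:eta} by induction on $k$ starting from $k=2$, $\lambda^{(1,2)}=\Ker^{(1,2)}$. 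After these reductions one is left with $[P_k(\Ker\odot\boldsymbol{\Phi})]_{i_k}=\phi^k_{i_k}\sum_{i_1}\alpha^{(k,1)}_{i_k,i_1}\,\phi^1_{i_1}\,\lambda^{(1,k)}_{i_1,i_k}$; since $(\phi^1\odot\lambda^{(1,k)})^{\intercal}_{i_k,i_1}=\phi^1_{i_1}\lambda^{(1,k)}_{i_1,i_k}$, the sum over $i_1$ equals $\langle\alpha^{(k,1)},(\phi^1\odot\lambda^{(1,k)})^{\intercal}\rangle_{i_k}$, and multiplying componentwise by $\phi^k$ gives the asserted identity.

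The main obstacle is organisational rather than conceptual: one has to keep the circular indexing straight when matching the nested sums to the two recursions, in particular the wrap-around at the edge $\{K,1\}$ and the degenerate middle products arising for $k=2$ and $k=K$, where one of the arcs reduces to a single kernel matrix. Once the two partial-product identities for $\alpha^{(k,1)}$ and $\lambda^{(1,k)}$ are in place, the remaining work is just the translation of a double sum over the index $i_1$ into the inner product $\langle\cdot,\cdot\rangle$ with respect to the second dimension.
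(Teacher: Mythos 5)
Your proposal is correct and follows essentially the same route as the paper: start from the nested-sum expression for $[P_k(\Ker\odot\boldsymbol{\Phi})]_{i_k}$, recognise the partial product over the arc $k\to k+1\to\dots\to K\to 1$ as $\alpha^{(k,1)}_{i_k,i_1}$ and the partial product over the arc $1\to2\to\dots\to k$ (with $\phi^1$ factored out) as $\lambda^{(1,k)}_{i_1,i_k}$ via their recursions, and assemble the remaining sum over $i_1$ into the row-wise inner product. The paper likewise handles $k=1$ by direct reference to Theorem~\ref{prop:marginal_circle} and dispatches $k=2$ as a degenerate instance of the general argument, matching your remark about the short arc.
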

	\begin{proof}
		Let $k\in[K]$ and $i_k\in[n_k]$.
		For $k=1$, the assertion follows from Theorem~\ref{prop:marginal_circle}. For $k>2$, we have
		\begin{align}
			\left[P_k\left(\Ker \odot \boldsymbol{\Phi}\right)\right]_{i_k} &= \phi^k_{i_k}\sum_{i_1}\phi^{1}_{i_{1}}\sum_{i_2}\Ker^{(1, 2)}_{i_1, i_2}\phi^{2}_{i_2}\cdots
			\sum_{i_{k-1}}\Ker^{(k-2, k-1)}_{i_{k-2}, i_{k-1}}\phi^{k-1}_{i_{k-1}}\Ker^{(k-1, k)}_{i_{k-1}, i_{k}}\\
			&\hspace{0.9cm}\sum_{i_{k+1}}\Ker^{(k, k+1)}_{i_{k}, i_{k+1}}\phi^{k+1}_{i_{k+1}}\cdots \sum_{i_K}\Ker^{(K-1, K)}_{i_{K-1}, i_{K}}\phi^{K}_{i_{K}}\Ker^{(K, 1)}_{i_{K}, i_{1}}\\
			&= \phi^k_{i_k}\sum_{i_1}\phi^{1}_{i_{1}}\underbrace{\sum_{i_2}\Ker^{(1, 2)}_{i_1, i_2}\phi^{2}_{i_2}\cdots 
				\sum_{i_{k-1}}\Ker^{(k-2, k-1)}_{i_{k-2}, i_{k-1}}\phi^{k-1}_{i_{k-1}}\Ker^{(k-1, k)}_{i_{k-1}, i_{k}}}_{\mathrm{\uproman{1}}\coloneqq}\alpha^{(k,1)}_{i_k,i_1}.
		\end{align}
		Furthermore the term $\mathrm{\uproman{1}}$ can be rewritten as
		\begin{align}
			\mathrm{\uproman{1}}= \sum_{i_{k-1}}\Ker^{(k-1, k)}_{i_{k-1}, i_{k}}\phi^{k-1}_{i_{k-1}}\sum_{i_{k-2}}\Ker^{(k-2, k-1)}_{i_{k-2}, i_{k-1}}\phi^{k-2}_{i_{k-2}}\cdots \sum_{i_2}\Ker^{(1, 2)}_{i_1, i_2}\phi^{2}_{i_2}\Ker^{(2,3)}_{i_{2}, i_{3}}
		\end{align}
		By \eqref{eq:eta}, we have
		\begin{equation}
			\lambda^{(1,3)}_{i_1,i_3}=\sum_{i_2}\Ker^{(1, 2)}_{i_1, i_2}\phi^{2}_{i_2}\Ker^{(2,3)}_{i_{2}, i_{3}}
			,\quad i_1\in[n_1],\ i_3\in[n_3].
		\end{equation}
		This implies that
		\begin{align}
			\mathrm{\uproman{1}}&= \sum_{i_{k-1}}\Ker^{(k-1, k)}_{i_{k-1}, i_{k}}\phi^{k-1}_{i_{k-1}}\sum_{i_{k-2}}\Ker^{(k-2, k-1)}_{i_{k-2}, i_{k-1}}\phi^{k-2}_{i_{k-2}}\cdots \sum_{i_3}\Ker^{(3,4)}_{i_3,i_4}\phi^{3}_{i_3}\lambda^{(1,3)}_{i_1, i_3},\\
			&=\cdots = \sum_{i_{k-1}}\Ker^{(k-1, k)}_{i_{k-1}, i_{k}}\phi^{k-1}_{i_{k-1}}\lambda^{(1,k-1)}_{i_1, i_{k-1}}= \big[\lambda^{(1,k-1)}\left(\phi^{k-1}\odot\Ker^{(k-1, k)}\right)\big]_{i_1,i_k}
			=\lambda^{(1,k)}_{i_1, i_{k}}.
		\end{align}
		Hence, the hypothesis is true for all $k>2$. The case $k=2$ can be proved with the same procedure.
	\end{proof}
	
	In order to efficiently compute a maximizing sequence of the dual $\MOTe$ problem~\eqref{eq:dual_problem},
	we set for $k\in [K]$ the dual matrices
	\begin{align}
		\beta_k \coloneqq \alpha^{(k,1)} \in \R^{n_k\times n_1},\\
		\gamma_k \coloneqq \lambda^{(k,1)} \in \R^{n_1\times n_k},
	\end{align}
	given in Theorems \ref{prop:marginal_circle} and \ref{prop:marg_new_circ}, respectively.
	The method is shown in Algorithm \ref{alg:sink-circle}.
	\begin{algorithm}[ht]
		\caption{Sinkhorn algorithm for circle structure}\label{alg:sink-circle}
		\begin{algorithmic}[1]
			\State \textbf{Input:} Initialization $(\phi^k)^{(0)},\ {k\in [K]},$ parameters $\eta, \delta>0$
			\State Initialize $r\leftarrow0$
			\For{$k=K,\cdots,2 $}
			\State $\displaystyle
			\left(\beta_k\right)^{(0)} \coloneqq
			\begin{cases}
				\Ker^{(K,1)} &\text{if } k=K,\\
				\Ker^{(k, k+1)}\left(\left(\phi^{k+1}\right)^{(0)}\odot\left(\beta_{k+1}\right)^{(0)}\right) &\text{otherwise}
			\end{cases}
			$
			\EndFor
			\Do
			\State $\displaystyle
			\left(\phi^{1}\right)^{(r+1)}\coloneqq\mu^1\oslash\left\langle \Ker^{(1,2)},\left( \left(\phi^2\right)^{(r)}\odot\left(\beta_2\right)^{(r)}\right)^{\intercal}\right\rangle
			$
			\For{$k=2,\cdots,K$}
			\State 
			$\displaystyle
			\left(\gamma_k\right)^{(r)} \coloneqq
			\begin{cases}
				\Ker^{(1,2)} &\text{if } k=2,\\ 
				\left(\gamma_{k-1}\right)^{(r)} \left(\left(\phi^{k-1}\right)^{(r+1)}\odot \Ker^{(k-1,k)}\right) &\text{otherwise }
			\end{cases}
			$
			\State Compute dual vector
			$\displaystyle
			\left(\phi^{k}\right)^{(r+1)}\coloneqq\mu^k\oslash\left\langle \left(\beta_k\right)^{(r)}, \left(\left(\phi^1\right)^{(r+1)}\odot \left(\gamma_k\right)^{(r)}\right)^{\intercal}\right\rangle
			$
			\EndFor
			\For{$k=K,\cdots,2$}
			\State Compute $\left(\beta_k\right)^{(r+1)}$ according to step $4$
			\EndFor
			\State Compute
			$\displaystyle
			\mathcal{S}^{(r)} \coloneqq {\eta}\left(\sum_{k=1}^K\left(\mu^k\right)^{\intercal}\log\left(\phi^k\right)^{(r)}-\left({\mu^1}\oslash{\left(\phi^1\right)^{(r+1)}}\right)^{\intercal}\left(\phi^1\right)^{(r)}\right)
			$
			\State Increment $r\leftarrow r+1$
			\doWhile{$|{S^{(r)}-S^{(r-1)}}| \geq \delta$}
			\State \Return Optimal plan $\hat{\boldsymbol{\Pi}}= \Ker\odot \hat{\boldsymbol{\Phi}}$, where $\hat{\boldsymbol{\Phi}} = \bigotimes_{k\in[K]} \left(\phi^k\right)^{(r+1)}$
		\end{algorithmic}
	\end{algorithm}
	
	The tree-structured and circle-structured $\MOTe$ problems have both a sparse cost function which considerably improves the computational complexity of the Sinkhorn algorithm. In each iteration step, Algorithm~\ref{alg:sink-tree} requires only ${2(K-1)}$ matrix--vector products, which have a complexity of $\mathcal O(N^2)$ where $N\coloneqq \norm{\bn}_\infty$,
	and Algorithm~\ref{alg:sink-circle} requires ${2(K-1)}$ matrix--matrix products, which have a complexity of $\mathcal O(N^3)$. 
	This can be considerably improved by employing fast Fourier techniques, as we will see in the next section.

	\section{Non-uniform discrete Fourier transforms}\label{sec:NFFT-Sinkhorn}
	
	The main computational cost of the Sinkhorn algorithm comes from the matrix--vector product with the kernel matrix \eqref{eq:K-matrix}.
	Let $k,\ell\in[K]$ and $\alpha\in\R^{n_{\ell}}$.
	We briefly describe a fast summation method for the computation of 
	$\beta = \Ker^{(k,\ell)}\alpha$, i.e., 
	\begin{equation} \label{eq:fastsum-sum}
		\beta_{i_k}
		=
		\sum_{i_{\ell}=1}^{n_{\ell}}
		\alpha_{i_{\ell}}\,
		\exp \left({-\tfrac1\eta \norm{x^k_{i_k} - x^{\ell}_{i_{\ell}}}_2^2}\right)
		,\quad i_k\in [n_k].
	\end{equation}
	We refer to \cite{postni04} for a detailed derivation and error estimates.
	The main idea is to approximate the kernel function
	\begin{equation} \label{eq:kappa}
		\kappa(x)
		\coloneqq
		\exp ({-\tfrac1\eta x^2})
		,\quad x\in\R,
	\end{equation}
	by a Fourier series.
	In order to ensure fast convergence of the Fourier series, 
	we extend $\kappa$ to a periodic function of certain smoothness  $p\in\N$.
	Let $\varepsilon_{\mathrm B}>0$ and
	$
	\tau>\varepsilon_{\mathrm B} + \max\left\{\|x^k_{i_k}-x^{\ell}_{i_{\ell}}\|_2^2: i_k\in [n_k],i_{\ell}\in [n_{\ell}]\right\}
	$.
	For $x\in\R$, we define the regularized kernel
	\begin{equation} \label{eq:Ker-reg}
		\kappa_{\mathrm R}(x)
		\coloneqq
		\begin{cases}
			\kappa(x), & \abs{x}\le \tau-\varepsilon_{\mathrm B},\\
			\kappa_{\mathrm B}(x), & \tau-\varepsilon_{\mathrm B}< \abs{x} \le \tau,\\
			\kappa_{\mathrm B}(\tau), & \abs x > \tau,
		\end{cases}
	\end{equation}
	where $\kappa_{\mathrm B}$ is a polynomial of degree $p$ that fulfills the Herminte interpolation conditions
	$
	\kappa_{\mathrm B}^{(r)}(\tau-\varepsilon_{\mathrm B}) 
	= 
	\kappa^{(r)}(\tau-\varepsilon_{\mathrm B})
	$ for 
	$
	r=0,\dots,p-1,
	$
	and
	$
	\kappa_{\mathrm B}^{(r)}(\tau) = 0
	$
	for
	$
	r=1,\dots,p-1,
	$
	see Figure~\ref{fig:reg_kernel}.
	\begin{figure}[ht]
		\centering
		\begin{tikzpicture}
			\begin{axis}[xlabel=$x$, ylabel={}, width=.5\textwidth, height=4cm, xmin=-1, xmax=3, ymin=0, ymax=1, axis x line*=bottom, axis y line*=left, legend pos=outer north east, legend style={cells={anchor=west}}]
				\addplot+[thick,mark=none,blue,domain=-.8:.8] {exp(-4 * x^2};
				\addplot+[thick,mark=none,red,domain=.8:1.2] {1.26471+x*(-2.47375+x*1.23688)};
				\addplot+[thick,mark=none,red,domain=-1.2:-.8] {1.26471-x*(-2.47375-x*1.23688)};
				\addplot+[thick,mark=none,blue,domain=1.2:2.8] {exp(-4 * (x-2)^2};
				\addplot+[thick,mark=none,red,domain=2.8:3.2] {1.26471+(x-2)*(-2.47375+(x-2)*1.23688)};
				\legend{$\kappa(x)$,$\kappa_{\mathrm B}(x)$}
			\end{axis}
		\end{tikzpicture}
		\caption{Regularized kernel $\kappa_{\mathrm R}$ for $\eta=1/4$, periodicity length $\tau=1$, boundary interval $\varepsilon_{\mathrm B}=0.2$ and smoothness $p=1$.
		}
		\label{fig:reg_kernel}
	\end{figure}
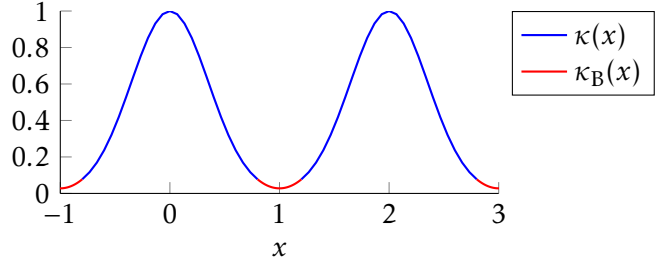
	
	Then we define a $2\tau$-periodic function on $\R^d$ by
	\begin{equation} \label{eq:kappa_tilde}
		\tilde\kappa (x)
		\coloneqq
		\kappa_{\mathrm{R}} (\norm x)
		,\quad x\in [-\tau,\tau)^d.
	\end{equation}
	By construction, $\tilde\kappa$ is $p$ times continuously differentiable
	and we have 
	$\tilde\kappa(x^k_{i_k}-x^{\ell}_{i_{\ell}})= \exp(-{\frac1\eta \norm{\smash{x^k_{i_k}-x^{\ell}_{i_{\ell}}}}_2^2})$ 
	for all 
	$i_k\in[n_k],\,i_{\ell}\in[n_{\ell}]$.
	
	Let $M\in\N$.
	We approximate $\tilde\kappa$ by the $2\tau$-periodic Fourier expansion of degree $2M$, 
	\begin{equation}\label{eq:fourier_serie}
		\tilde\kappa(x)
		\approx
		\sum_{m\in \{-M,\dots,M-1\}^d }
		\hat\kappa(m)\, \exp ({\im \tfrac{\pi}{\tau} m^\tT x}),
		\quad x\in\R^d,
	\end{equation}
	with {the discrete} Fourier coefficients $\hat\kappa(m)\in\C${,
		which can be efficiently approximated by the fast Fourier transform (FFT)
		\begin{equation} \label{eq:fft}
			\hat\kappa(m)
			\coloneqq
			\frac{1}{(2M)^d}
			\sum_{x\in \frac{\tau}{M}\{-M,\dots,M-1\}^d}
			\tilde\kappa(x)\, \exp(-\im \tfrac{\pi}{\tau} m^\top x)
			,\quad m\in \{-M,\dots,M-1\}^d.
		\end{equation}
	}
	
	This yields an approximation of \eqref{eq:fastsum-sum} by
	\begin{multline}
		\beta_{i_k}
		=
		\sum_{j=1}^{n_{\ell}}
		\tilde\kappa
		(x^k_{i_k}-x^{\ell}_{i_{\ell}})
		\, \alpha_{i_{\ell}}
		\approx
		\sum_{i_{\ell}=1}^{n_{\ell}}
		\sum_{m\in\{-M,\dots,M-1\}^d }
		\hat\kappa(m)\,
		\exp \left({\im \tfrac{\pi}{\tau} m^\tT (x^k_{i_k}-x^{\ell}_{i_{\ell}})}\right)
		\, \alpha_{i_{\ell}}
		\\
		=
		\sum_{m\in\{-M,\dots,M-1\}^d }
		\hat\kappa(m)
		\left(
		\sum_{i_{\ell}=1}^{n_{\ell}}
		\alpha_{i_{\ell}}\,
		\exp \left({-\im \tfrac{\pi}{\tau} m^\tT x^{\ell}_{i_{\ell}}}\right)
		\right) 
		\,\exp \left({\im \tfrac{\pi}{\tau} m^\tT x^k_{i_k}}\right).
		\label{eq:fastsum1}
	\end{multline}
	
	The \emph{non-uniform discrete Fourier transform} (NDFT) of $\boldsymbol{\hat\kappa}\coloneqq[\hat\kappa(m)]_{m\in\{-M,\dots,M-1\}^d}$ at the nodes $\Omega^k\subset\R^d$ is defined by
	\begin{equation} \label{eq:ndft}
		[\bF_k \boldsymbol{\hat\kappa}]_{i_{\ell}}
		\coloneqq
		\sum_{m\in\{-M,\dots,M-1\}^d }
		\hat\kappa(m)\,
		\exp \left({-\im \tfrac{\pi}{\tau} m^\tT x^k_{i_{\ell}}}\right)
		,\quad i_{\ell}\in [n_{\ell}],
	\end{equation} 
	and the adjoint NDFT of $\alpha\in\R^{n_{\ell}}$ on the set $\Omega^{\ell}$ is given by
	\begin{equation} \label{eq:ndft*}
		[\bF^*_{\ell} \alpha]_m
		\coloneqq
		\sum_{i_{\ell}=1}^{n_{\ell}}
		\alpha_{i_{\ell}}\,
		\exp \left({-\im \tfrac{\pi}{\tau} m^\tT x^{\ell}_{i_{\ell}}}\right)
		,\quad m\in\{-M,\dots,M-1\}^d,
	\end{equation}
	cf.\ \cite[Sect.\ 7]{PlPoStTa18}.
	Therefore, the approximation \eqref{eq:fastsum1} can be written as
	\begin{equation} \label{eq:fastsum_mat}
		\beta
		=
		\Ker^{(k,\ell)}\alpha
		\approx
		\bF_k (\boldsymbol{\hat\kappa} \odot \bF^*_{\ell} \alpha).
	\end{equation}
	{The procedure is summarized in Algorithm~\ref{alg:fastsum}.}
	
	\begin{algorithm}[ht] 
		\caption{NFFT-based fast summation}\label{alg:fastsum}
		\algblockx[Name]{Precompute}{End}{\textbf{precomputation}}{\textbf{end}}
		\begin{algorithmic}[1]
			\State \textbf{Input:} Vector $\alpha \in \R^{n_\ell}$, kernel function $\kappa$ in \eqref{eq:kappa}, parameters $\varepsilon_{\mathrm B}>0$, $M\in\N$
			\Precompute
			\State Compute the regularized kernel $\kappa_{\mathrm R}$ by \eqref{eq:Ker-reg}
			\State Compute the periodized kernel $\tilde\kappa$ by \eqref{eq:kappa_tilde}
			\State Compute the discrete Fourier coefficients $\hat \kappa(m)$, $m\in\{-M,\dots,M-1\}^d$, by an FFT, see~\eqref{eq:fft}
			\End
			\State Compute the adjoint NDFT $\boldsymbol{F}_\ell \alpha$, see \eqref{eq:ndft*}
			\State Compute the pointwise product $\hat\beta \coloneqq \boldsymbol{\hat\kappa} \odot \boldsymbol{F}_\ell \alpha$
			\State Compute the NDFT $\beta \coloneqq \boldsymbol{F}_k \hat\beta$, see \eqref{eq:ndft}
			\State \Return $\beta \approx \Ker^{(k,\ell)} \alpha$
		\end{algorithmic}
	\end{algorithm}
	
	There are fast algorithms,
	known as \emph{non-uniform fast Fourier transform} (NFFT),
	allowing the computation of an NDFT \eqref{eq:ndft} and its adjoint \eqref{eq:ndft*} in $\mathcal O(M^d\log M + N)$ steps
	up to arbitrary numeric precision,
	see, e.g., \cite{bey95,duro95} and \cite[Sect. 7]{PlPoStTa18},
	where $N=\norm{\bn}_\infty$.
	Note that the direct implementation of \eqref{eq:fastsum1} requires $\mathcal O(M^d N)$ operations.
	We call the Sinkhorn algorithm where the matrix--vector multiplication is performed via \eqref{eq:fastsum_mat} the \emph{NFFT-Sinkhorn algorithm}.
	Provided we fix the Fourier expansion degree $M$, which is possible because of the smoothness of $\kappa$,
	we end up at a numerical complexity of $\mathcal O(KN)$ for each iteration step of the NFFT-Sinkhorn algorithm for trees.
	In case of a circle (Algorithm \ref{alg:sink-circle}), we can apply the fast summation column by column for the matrix--matrix product with $\Ker^{(k,k+1)}$, yielding a complexity of $\mathcal O(KN^2)$ for each iteration step.
	
	\section{Numerical examples}\label{sec:Numerical_examples}
	
	We illustrate the results from section~\ref{sec:NFFT-Sinkhorn} concerning the Sinkhorn algorithm and its accelerated version, the NFFT-Sinkhorn.
	First, we investigate the effect of parameter choices in some artificial examples.
	Then, we look at the one-dimensional Euler flow problem of incompressible fluids and the fixed support barycenter problem of images.\footnote{The code for our examples is available at \href{https://github.com/fatima0111/NFFT-Sinkhorn}{https://github.com/fatima0111/NFFT-Sinkhorn}.}
	All computations were performed on an 8-core Intel Core i7-10700 CPU and 32GB memory.
	For computing the NFFT of section~\ref{sec:NFFT-Sinkhorn}, we rely on the implementation \cite{KeKuPo09}.
	
	\subsection{Uniformly distributed points}
	We consider the $\MOTe$ problem for uniform measures $\mu^k$ of uniformly distributed points on $\Omega=[-1/2,1/2]$.
	We chose the entropy regularization parameter $\eta=0.1$, so that a boundary regularization \eqref{eq:Ker-reg} for the fast summation method is necessary. 
	For the tree-structured cost function, we set the boundary regularization $\varepsilon_{\mathrm{B}}=1/16$, the Fourier expansion degree $M=156,$ and the smoothness parameters $p=3$, see section~\ref{sec:NFFT-Sinkhorn}.
	In Figure~\ref{fig:tree_structure_time_complexity} left,
	we see the linear dependence of the computational time on the number $K$ of marginals.
	For a growing number $N$ of points, 
	the NFFT-Sinkhorn algorithm, which requires $\mathcal O(KN)$ steps, clearly outperforms the standard method, which requires $\mathcal O(KN^2)$ steps,
	see figure~\ref{fig:tree_structure_time_complexity} right.
	
	\begin{figure}[ht]
		\centering
		\begin{tikzpicture}[smooth] 
			\begin{groupplot}[group style={
					group name=my plots,
					group size=2 by 1,
					xlabels at=edge bottom,
					horizontal sep=1.5cm,vertical sep=3cm,},		
				legend style={at={(1.5,-.2)},anchor=north east},
				subtitle/.style={title=\gpsubtitle{#1}}]
				\nextgroupplot[xlabel=$K$, ylabel=Time (s), width={0.5\linewidth}, height={0.36\linewidth}, 
				ymode=log, xmode=log, xmin=3, xmax = 15, 
				xtick={3,4,5,6,8,10,12,15},
				xticklabels={3,4,5,6,8,10,12,15},
				]
				\addlegendentry{NFFT-Sinkhorn}
				\addplot+[very thick,green!90!black, mark size=1pt,
				] table[x index=0,y index=1]{data/complexity_tree_with_n_p_10000.dat};
				\addlegendentry{Sinkhorn}
				\addplot+[very thick,blue!90!black, mark size=1pt,
				] table[x index=0,y index=2]{data/complexity_tree_with_n_p_10000.dat};
				
				\nextgroupplot[xlabel=$N$,  width={0.5\linewidth}, height={0.36\linewidth},  ymode=log, xmode=log, xmin=200, xmax=25200, 
				]
				\addplot+[very thick,green!90!black, mark size=1pt,
				] table[x index=0,y index=1]{data/complexity_tree_with_K_10_eta_0.1.dat};
				\addplot+[very thick,blue!90!black, mark size=1pt,
				] table[x index=0,y index=2]{data/complexity_tree_with_K_10_eta_0.1.dat};
			\end{groupplot}
		\end{tikzpicture}
		\caption{Computation time { in seconds} of $\MOTe$ with tree-structured cost function with regularization parameter $\eta = 0.1.$ Left: fixed  $N=10^4.$ Right: fixed $K=10.$  }
		\label{fig:tree_structure_time_complexity}
	\end{figure}
	
	In Figure \ref{fig:circle_structure_time_complexity}, we show the computation times for the circle-structured cost function, with the parameters $\eta = 0.1$, $\varepsilon_{\mathrm{B}}=3/32$, $M=2000$, and $p=3$.
	As the Sinkhorn iteration requires matrix--matrix products,
	it is more costly than for the tree-structured cost function
	and so we used a lower number of points $N$.
	The advantage of the NFFT-Sinkhorn is smaller than for the tree, but still considerable.
	We point out here that the fast summation method is applied column by column to the matrix--matrix product,
	and the Fourier expansion degree $M$ is larger.	
	
	\begin{figure}[htb]
		\centering
		\begin{tikzpicture}
			\begin{groupplot}[group style={
					group name=my plots,
					group size=2 by 1,
					xlabels at=edge bottom,
					horizontal sep=1.5cm,vertical sep=3cm},
				legend style={at={(1.5,-.2)},anchor=north east},
				subtitle/.style={title=\gpsubtitle{#1}}]
				\nextgroupplot[xlabel=$K$, ylabel=Time (s), width={0.5\linewidth}, height={0.36\linewidth}, 
				ymode=log, xmode=log, xmin=3, xmax = 15,  
				xtick={3,4,5,6,8,10,12,15},
				xticklabels={3,4,5,6,8,10,12,15},
				]
				\addlegendentry{NFFT-Sinkhorn}
				\addplot+[very thick,green!90!black, mark size=1pt,
				] table[x index=0,y index=1]{data/circle_complexity_wrt_K_15_with_n_p_700.dat};
				\addlegendentry{Sinkhorn}
				\addplot+[very thick,blue!90!black, mark size=1pt,
				] table[x index=0,y index=2]{data/circle_complexity_wrt_K_15_with_n_p_700.dat};
				\nextgroupplot[xlabel=$N$,  width={0.5\linewidth}, height={0.36\linewidth},  ymode=log, xmode=log, 
				]
				\addplot+[very thick,green!90!black, mark size=1pt,
				] table[x index=0,y index=1]{data/circle_complexity_wrt_n_p_10200_with_K_3.dat};
				\addplot+[very thick,blue!90!black, mark size=1pt,
				] table[x index=0,y index=2]{data/circle_complexity_wrt_n_p_10200_with_K_3.dat};
			\end{groupplot}
		\end{tikzpicture}
		\caption{Computation time { in seconds} of $\MOTe$ with circle-structured cost function with regularization parameter $\eta = 0.1.$ Left: fixed  $N=700.$ Right: fixed $K=3.$}
		\label{fig:circle_structure_time_complexity}
	\end{figure}

	{Finally, we investigate how the approximation error between the Sinkhorn and NFFT-Sinkhorn algorithm, $|{\mathcal{S}^{(r)}-\tilde{S}^{(r)}}|$, depends on the entropy regularization parameter $\eta$ and the Fourier expansion degree $M$ at a fixed iteration $r=10$, where $\mathcal{S}^{(r)}$ denotes the evaluation with the Sinkhorn algorithm and $\tilde{\mathcal{S}}^{(r)}$ its evaluation with the NFFT-Sinkhorn algorithm. 
		Since the time differences for different $M$ in the 1-dimensional case are very small,
		we consider $2$-dimensional uniform marginal measures for the $\MOTe$ problems with tree- or circle-structured cost functions.
		In Figures~\ref{fig:tree_error_time_wrt_M} and \ref{fig:circle_error_time_wrt_M},
		we see that for smaller $\eta$, we need a larger expansion degree $M$ to achieve a good accuracy.
		The error stagnates at a certain level and does not decrease anymore for increasing $M$.
		This could be improved by increasing the approximation parameter $p$ and the cutoff parameter of the NFFT, cf.\ \cite[Section 7]{PlPoStTa18}.
		Parameter choice methods for the NFFT-based summation were discussed in \cite{Ne15}.
		For an appropriately chosen $M$, the NFFT-Sinkhorn is usually much faster than the Sinkhorn algorithm.
		However, for very small $\eta$, the kernel function \eqref{eq:kappa} is concentrated on a small interval and therefore a simple truncation of the sum \eqref{eq:fastsum-sum} might be beneficial to the NFFT approximation.
	}
	\begin{figure}[htb]
		\centering
		\begin{tikzpicture}
			\begin{groupplot}[group style={
					group name=my plots,
					group size=2 by 1,
					xlabels at=edge bottom,
					ylabels at=edge left,
					horizontal sep=.5cm,vertical sep=3cm,}]
				\nextgroupplot[ width={0.5\linewidth}, height={0.36\linewidth}, ymode=log, xmode=log, xmin=8, xmax = 512,  
				xtick={8, 16, 32, 64, 128, 256, 512},
				xticklabels={8, 16, 32, 64, 128, 256, 512}, xlabel={ $M$}, ylabel={Error $|{\mathcal{S}^{(10)}-\tilde{S}^{(10)}}|$},legend style={at={(.96,-.3)},anchor=north east, legend columns=2}]
				\addlegendentry{$\eta=0.005$}
				\addplot+[very thick,orange!90!black, mark size=1pt, 
				] table[x index=0,y index=3]{data/tree_error_time_tradeoff_wrt_M_K_10_with_n_p_10000.dat};
				\addlegendentry{$\eta=0.05$}
				\addplot+[very thick, mark size=.5pt, red, 
				] table[x index=0,y index=5]{data/tree_error_time_tradeoff_wrt_M_K_10_with_n_p_10000.dat};
				\addlegendentry{$\eta=0.5$}
				\addplot+[very thick, mark size=.5pt, black, 
				] table[x index=0,y index=7]{data/tree_error_time_tradeoff_wrt_M_K_10_with_n_p_10000.dat};
				\nextgroupplot[xlabel=$M$, ylabel=Time (s), yticklabel pos=right,  width={0.5\linewidth}, height={0.36\linewidth},  ymode=log, xmode=log, 
				xmin=8, xmax = 512,
				xtick={8, 16, 32, 64, 128, 256, 512},
				xticklabels={8, 16, 32, 64, 128, 256, 512},legend 	style={at={(.8,-.3)},anchor=north east,legend columns=1}
				]
				\addlegendentry{NFFT-Sinkhorn}
				\addplot+[very thick,green!90!black, mark size=1pt,
				] table[x index=0,y index=1]{data/tree_error_time_tradeoff_wrt_M_K_10_with_n_p_10000.dat};
				\addlegendentry{Sinkhorn}
				\addplot+[very thick,blue!90!black, mark size=1pt,
				] table[x index=0,y index=2]{data/tree_error_time_tradeoff_wrt_M_K_10_with_n_p_10000.dat};
			\end{groupplot}
		\end{tikzpicture}
		\caption{{$\MOTe$ with tree-structured cost function, where $d=2$, $K=10,$ $N=10000,$ and $p=3.$ Left: Approximation error of $\mathcal S^{(10)}$ between Sinkhorn and NFFT-Sinkhorn algrithm depending on the number of Fourier coefficients $M$ of the NFFT. Right: Computation time in seconds.}}
		\label{fig:tree_error_time_wrt_M}
	\end{figure}
	
	\begin{figure}[htb]
		\centering
		\hspace{-5mm}
		\begin{tikzpicture}
			\begin{groupplot}[group style={
					group name=my plots,
					group size=2 by 1,
					xlabels at=edge bottom,
					horizontal sep=.5cm,vertical sep=3cm},
				subtitle/.style={title=\gpsubtitle{#1}}]
				\nextgroupplot[ width={0.5\linewidth}, height={0.36\linewidth}, ymode=log, xmode=log, xmin=8, xmax = 512,  
				xtick={8, 16, 32, 64, 128, 256, 512},
				xticklabels={8, 16, 32, 64, 128, 256, 512}, xlabel={ $M$}, ylabel={Error $|{\mathcal{S}^{(10)}-\tilde{S}^{(10)}}|$},legend style={at={(.96,-.3)},anchor=north east, legend columns=2}]
				\addlegendentry{$\eta=0.005$}
				\addplot+[very thick,orange!90!black, mark size=1pt, 
				] table[x index=0,y index=3]{data/circle_error_Time_tradeoff_wrt_M_K_3_with_n_p_1000.dat};
				\addlegendentry{$\eta=0.05$} 
				\addplot+[very thick, mark size=1pt, red, 
				] table[x index=0,y index=5]{data/circle_error_Time_tradeoff_wrt_M_K_3_with_n_p_1000.dat};
				\addlegendentry{ $\eta=0.5$}
				\addplot+[very thick, mark size=1pt, black, 
				] table[x index=0,y index=7]{data/circle_error_Time_tradeoff_wrt_M_K_3_with_n_p_1000.dat};
				\nextgroupplot[xlabel=$M$, ylabel=Time (s), yticklabel pos=right,  width={0.5\linewidth}, height={0.36\linewidth},  ymode=log, xmode=log, 
				xmin=8, xmax = 512,
				xtick={8, 16, 32, 64, 128, 256, 512},
				xticklabels={8, 16, 32, 64, 128, 256, 512},legend style={at={(.8,-.3)},anchor=north east,legend columns=1}
				]
				\addlegendentry{NFFT-Sinkhorn}
				\addplot+[very thick,green!90!black, mark size=1pt,
				] table[x index=0,y index=1]{data/circle_error_Time_tradeoff_wrt_M_K_3_with_n_p_1000.dat};
				\addlegendentry{Sinkhorn}
				\addplot+[very thick,blue!90!black, mark size=1pt,
				] table[x index=0,y index=2]{data/circle_error_Time_tradeoff_wrt_M_K_3_with_n_p_1000.dat};
			\end{groupplot}
		\end{tikzpicture}
		\caption{	{$\MOTe$ with circle-structured cost function, where $d=2$, $K=3,$ $N=1000,$ and $p=3.$ Left: Approximation error of $\mathcal S^{(10)}$ between Sinkhorn and NFFT-Sinkhorn algrithm depending on the number of Fourier coefficients $M$. Right: Computation time in seconds.}}
		\label{fig:circle_error_time_wrt_M}
	\end{figure}
	
	\subsection{Fixed-support Wasserstein barycenter for general trees}
	
	Let $\mathcal{T}=\left(\mathcal{V}, \mathcal{E}\right)$ be a tree with set of leaves	$\mathcal{L}$, see section~\ref{sec:tree}.
	For $k\in\mathcal{L}$, let measures~$\mu^k$
	and weights~$\tilde{w}_k \in [0,1]$ be given that satisfy $\sum_{k \in \mathcal{L}} \tilde{w}_k = 1$.
	For any edge $e\in \mathcal{E}$, we set
	\begin{equation}
		w_e \coloneqq \begin{cases}
			1 & \text{if } \abs{e\cap \mathcal{L}} \neq 1 ,\\
			\tilde{w}_{k}& \text{if } e\cap \mathcal{L}= \set{k}.
		\end{cases}
	\end{equation}
	The generalized barycenters are the minimizers $\mu^k$, $k\in\mathcal V\setminus\mathcal L$, of
	\begin{equation}\label{eq:barycenter_optimal_problem_tree}
		\inf_{{\mu^k\in \mathbb{P}\left(\Omega^k\right), k\in \mathcal{V}\setminus \mathcal{L}}} \sum_{e=\set{e_1, e_2} \in \mathcal{E}} w_e \mathrm{W}_2^2\left(\mu^{e_1}, \mu^{e_2}\right),
	\end{equation}
	where $\mathrm{W}_2^2(\mu^{e_1}, \mu^{e_2})$ is the squared Wasserstein distance \cite{BassettiGualandiVeneroni2018,CuturiDoucet2014} between the measures $\mu^{e_1}$ and $\mu^{e_2}.$ The well-known Wasserstein barycenter problem \cite{GabrielJulieMarcJulien2012, vonLindheim2022} is a special case of \eqref{eq:barycenter_optimal_problem_tree}, where the tree is star-shaped and the barycenter corresponds to the unique internal node. 
	We consider the fixed-support barycenter problem  \cite{BenamouCarlierCuturi2015, TakezawaSatoKozarevaRavi2021},
	where also the nodes $\bx^k$, $k\in\mathcal V\setminus\mathcal L$ are given,
	so that we need to optimize \eqref{eq:barycenter_optimal_problem_tree} only for $\mu^k_{i_k}$, $k\in\mathcal V\setminus\mathcal L$.
	This yields an MOT problem with the tree-structured cost
	\begin{equation}\label{eq:cost_function_barycenter_tree}
		C_{\bi} = \sum_{e=\set{e_1,e_2}\in \mathcal{E}} w_e \norm{\bx^{e_1}_{i_{e_1}}- \bx^{e_2}_{i_{e_1}}}^2_2,
		\quad \bi \in \bI,
	\end{equation}
	where the marginal constraints of \eqref{eq:feasible_discrete_plan} are only set for the known measures $\mu^k$, $k\in \mathcal{L}$, 
	see \cite{AguehCarlier2011}.
	This barycenter problem can be solved approximately using a modification of the Sinkhorn algorithm~\ref{alg:sink-tree}
	in which we replace line 12 by 
	\begin{equation}
		(\phi^k)^{(r+1)} = \begin{cases}
			\mathbf{1} & \text{if } k \in \mathcal{L},\\
			{\mu^k} \oslash \left({\gamma_k^{(r)} \odot \left(\beta^{\odot_{\mathcal{C}_{k}}}\right)^{(r)}}\right)& \text{otherwise.}
		\end{cases}
	\end{equation}
	
	\begin{figure}
		\centering
		\includegraphics[width=0.2\textwidth]{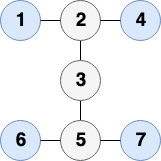}
		\caption{The tree graph of the barycenter problem, with leaves $\mathcal L$ marked in blue.}
		\label{fig:h_shaped}
	\end{figure}
	We test our algorithm with a tree consisting of $K=7$ nodes, see Figure \ref{fig:h_shaped}.
	The four given marginals $\mu^k$, $k\in\mathcal L$, are dithered images in $\R^2$ with uniform weights $\mu^k_{i_k} = 1/N$.
	As support points of the barycenters $\mu^k$, $k\in\mathcal V\setminus\mathcal L$, we take the union over all support points $\bx^k_{i_k}$ of all four input measures $\mu^k,$ $k\in \mathcal{L}.$ 
	Furthermore, we use the barycenter weights $\tilde w^k=1/4$.
	The given images and the computed barycenters are show in Figure~\ref{fig:barycenter_general_tree},
	where we executed $r = 150$ iterations of the Sinkhorn algorithm and its accelerated version.
	We chose the regularization parameter $\eta=5\cdot10^{-3}$ and the fast summation parameters $M=156$, $p=3.$
	
	\begin{figure}
		\centering
		\begin{minipage}[c]{\textwidth}
			\centering
			\subcaptionbox{Sinkhorn}
			{\includegraphics[width=0.46\textwidth]{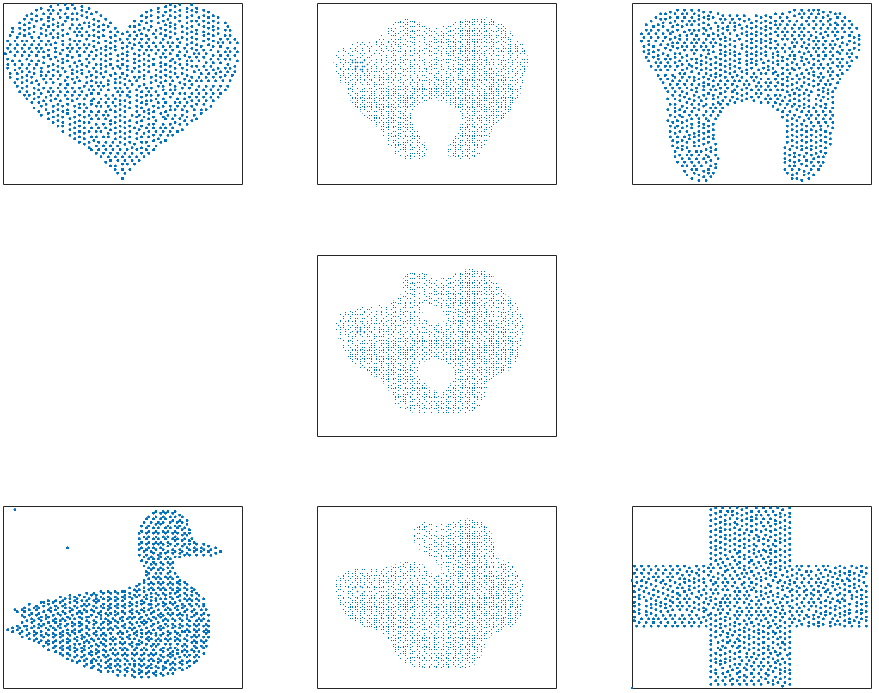}}\hfill
			\subcaptionbox{NFFT-Sinkhorn}
			{\includegraphics[width=0.46\textwidth]{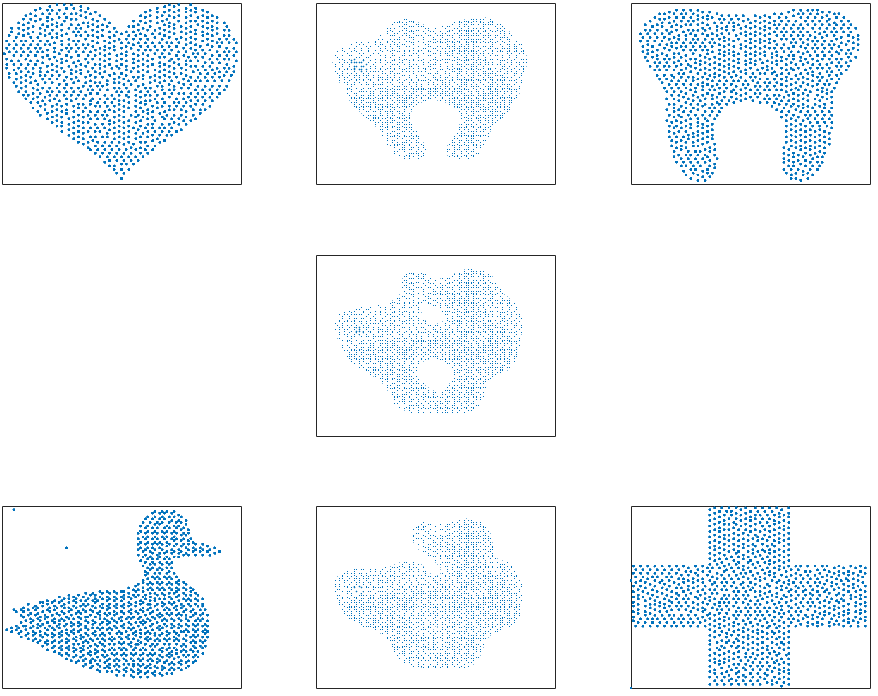}}
		\end{minipage}
		\caption{Given measures: $\set{1,4,6,7}, $ entropy regularization parameter $\eta=5\cdot 10^{-3}$, $r=150$, $\tilde{w}=\frac{1}{4}(1,1,1,1).$ 
			NFFT-Sinkhorn parameters: $M=156$, $p=3.$
			The test images are taken from \cite{flamary2021pot}.}
		\label{fig:barycenter_general_tree}
	\end{figure}

	\subsection{Generalized Euler flows}
	
	We consider the motion of $N$ particles of an incompressible fluid, e.g.\ water, in a bounded domain $\Omega$ 
	in discrete time steps $t_k\coloneqq (k-1)/(K-1)$, $k \in [K]$.
	We assume that we know the function $\sigma\colon\Omega\to\Omega$, which connects $N$ initial positions $x_{i_1}\in\Omega$ of particles with their final positions $x_{i_K}\in\Omega$.
	At each time step $t_k$, we know an image of the particle distribution,
	which is described by the discrete marginal measure~$\mu^k$, $k\in [K]$, with uniform weights $\mu^k_{i_k} = {1}/{N}$.
	We want to find out how the single particles move, i.e., their trajectories.
	Due to the least-action principle, this problem can be formulated as MOT problem \eqref{eq:MOT} 
	with the circle-structured cost  
	\begin{equation*}
		C_{\bi} 
		= \norm{x_{i_K}^K-\sigma\left(x_{k_1}^1\right)}_2^2+ \sum_{k=1}^{K-1} \norm{x_{i_{k+1}}^{k+1}-x_{i_k}^k}_2^2,
		\quad \bi\in\bI,
	\end{equation*}
	see \cite{YannBrenier1989,Brenier1993,Brenier97minimalgeodesics}.
	Then the pair marginal  
	\begin{equation}
		\boldsymbol{\Pi}_{1,k}
		\coloneqq
		\sum_{\ell\in [K]\setminus{\set{1,k}}}\sum_{i_{\ell} \in [n_{\ell}]} \boldsymbol{\Pi}_{i_1,\dots, i_K}
	\end{equation}
	of the optimal plan $\boldsymbol{\Pi}$
	provides the (discrete) probability that a particle which was initially at position $x_{i_1} \in \Omega$ is in position $x_{i_k}\in\Omega$ at time $t_k$, $k=2,\dots,K-1$. 
	The one-dimensional problem has been studied by several authors~\cite{AltschulerBoix-AdseraPolynomial, BenamouCarlierCuturi2015, BenamouCarlierNenna20217}, where the particles are assumed to be on a grid. 
	Here, we consider the case where the positions are uniformly distributed.
	We draw $N=400$ uniformly distributed points on $\Omega=[0,1]$. We use $K=5$ marginal constraints and the entropy regularization parameter $\eta = 0.05$.  Figures~\ref{fig:Euler-flow-Sinkhorn-map1} and \ref{fig:Euler-flow-Sinkhorn-map2} display the probability matrix $\boldsymbol{\Pi}_{1,k}$ describing the motion of the particles from initial time $t_1=0$ to time $t_k$,
	where we use $r = 50$ iterations for both the NFFT-Sinkhorn and the Sinkhorn algorithms, and two different connection functions $\sigma$.
	
	\begin{figure}[!ht]
		\centering
		\begin{minipage}[c]{0.17\linewidth}
			\centering
			{${{t_1=0}}$\par\medskip
				\includegraphics[width=\textwidth]{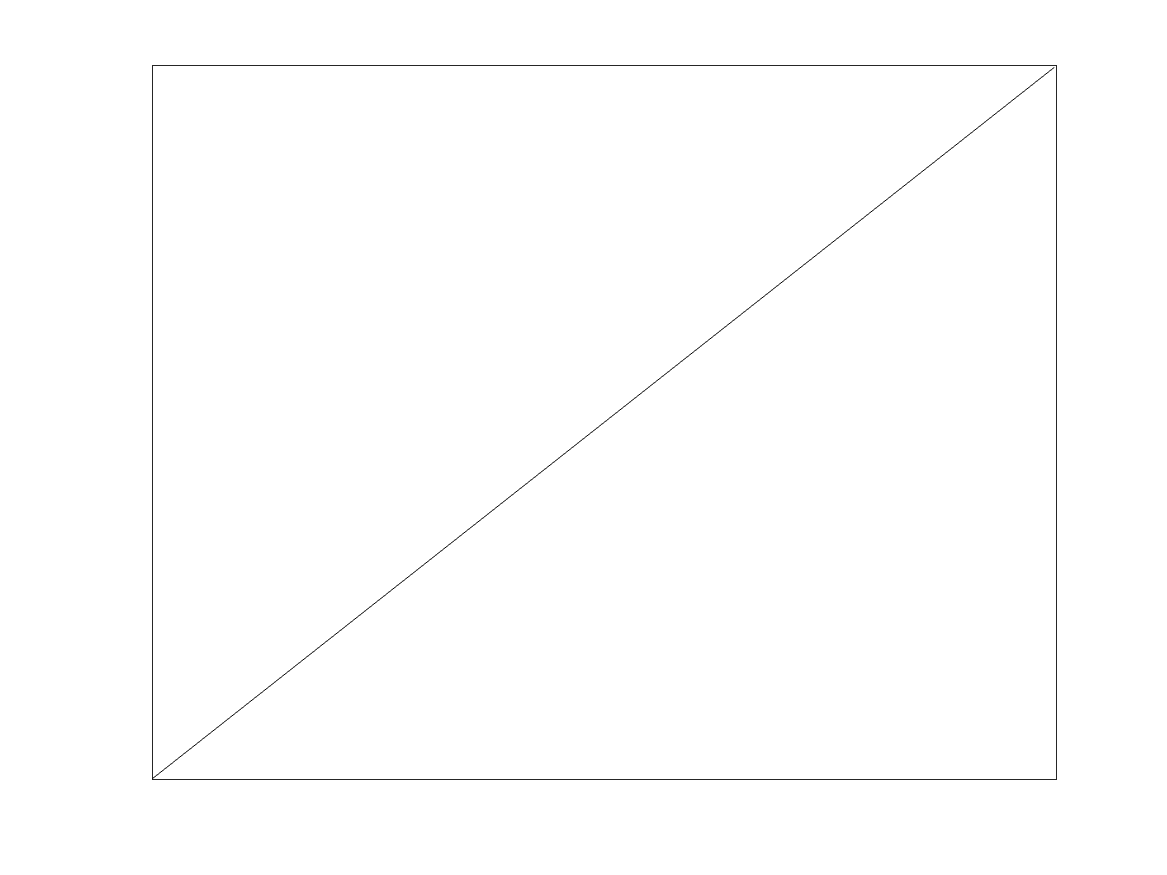}}\quad
		\end{minipage}
		\begin{minipage}[c]{0.15\textwidth}
			\centering
			{${{t_2=1/5}}$\par\medskip
				\includegraphics[width=\textwidth]{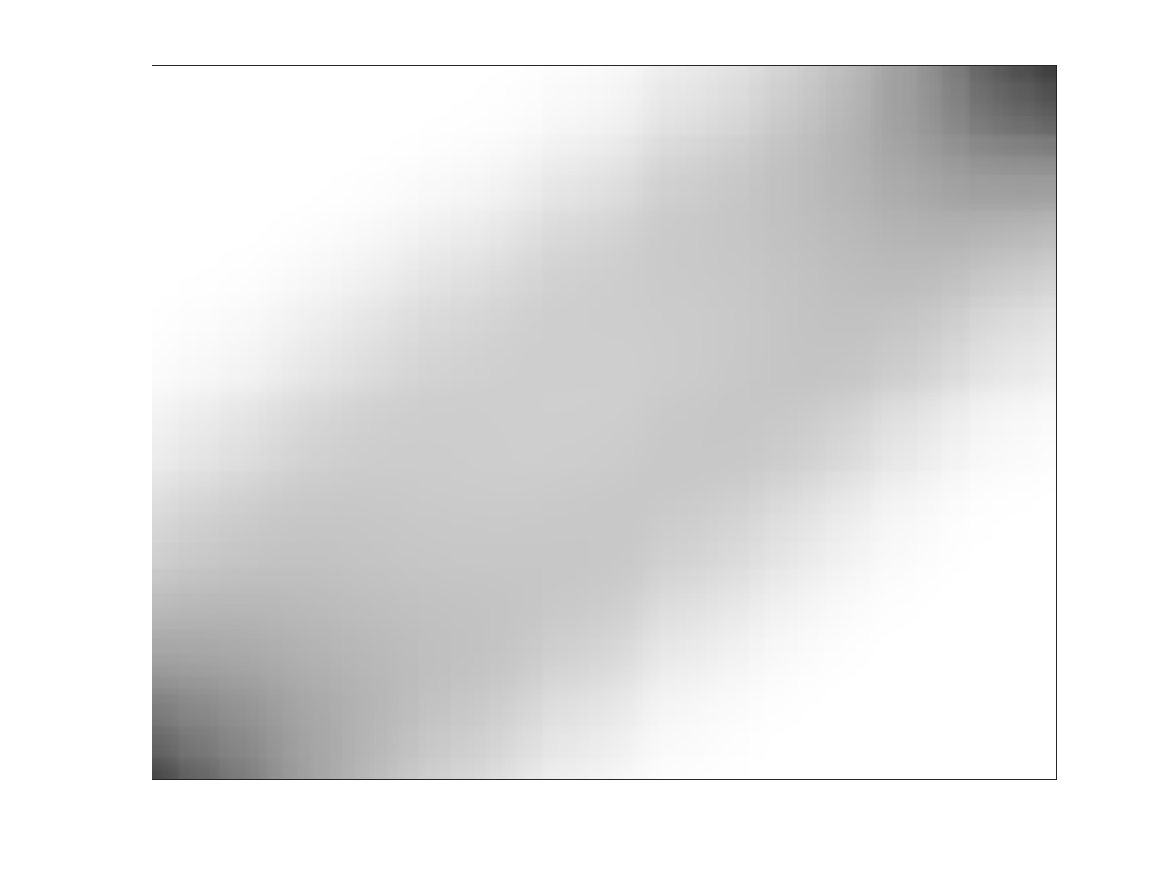}}\quad
			{\includegraphics[width=\textwidth]{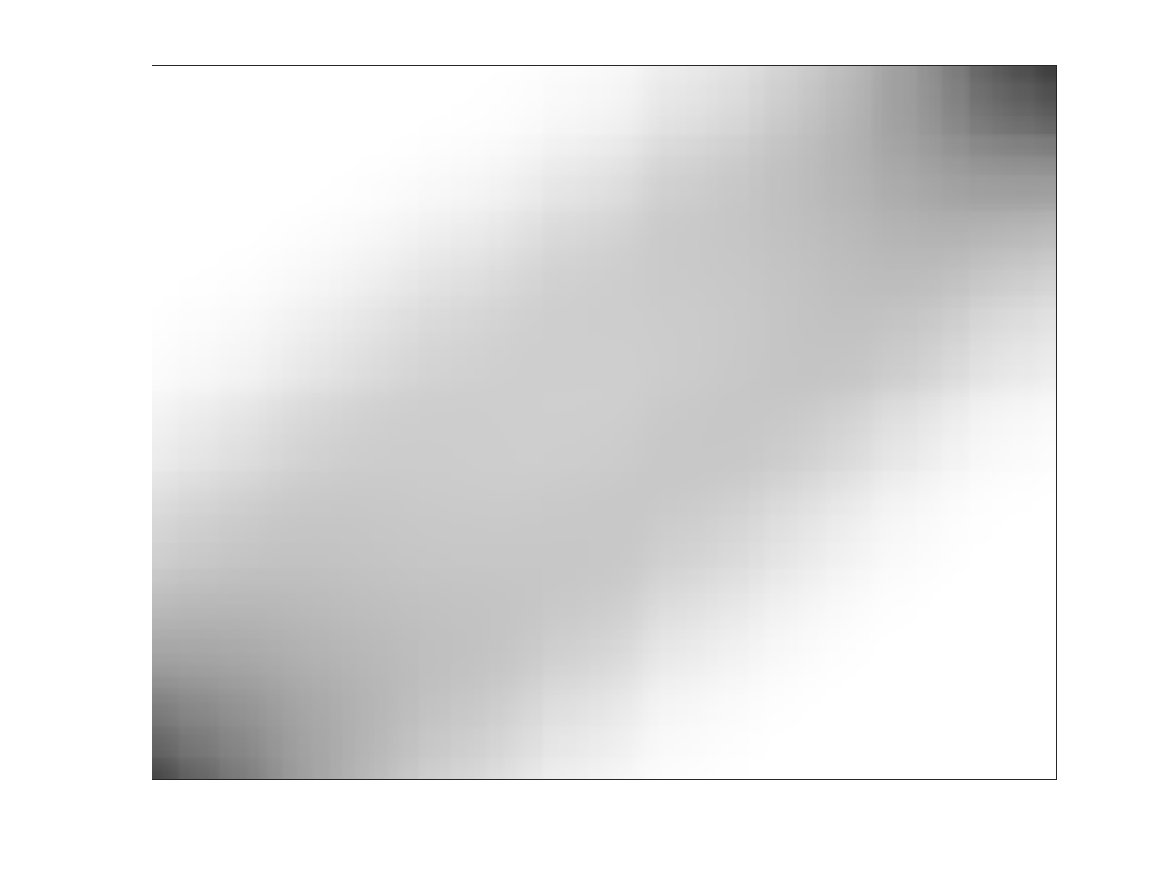}}
		\end{minipage}
		\begin{minipage}[c]{0.15\linewidth}
			\centering
			{${{t_3=2/5}}$\par\medskip
				\includegraphics[width=\textwidth]{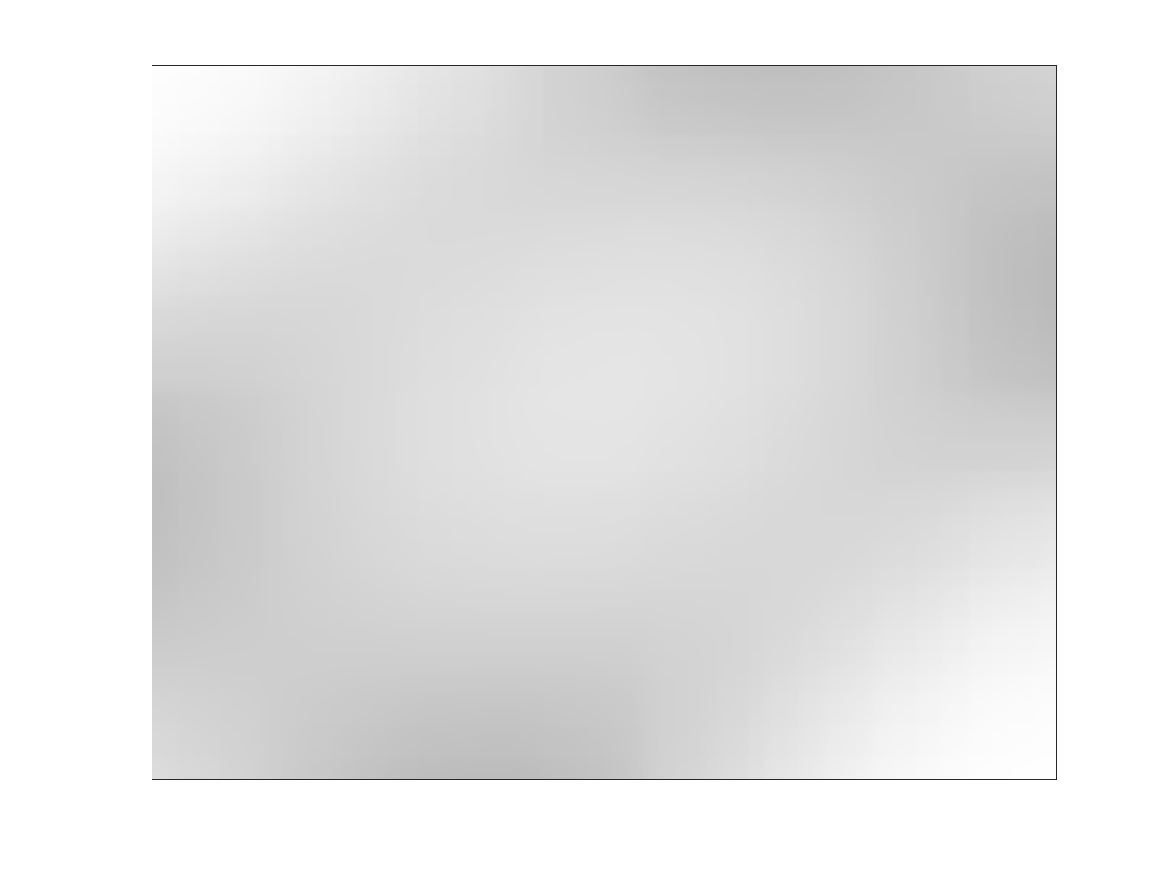}}\quad
			{
				\includegraphics[width=\textwidth]{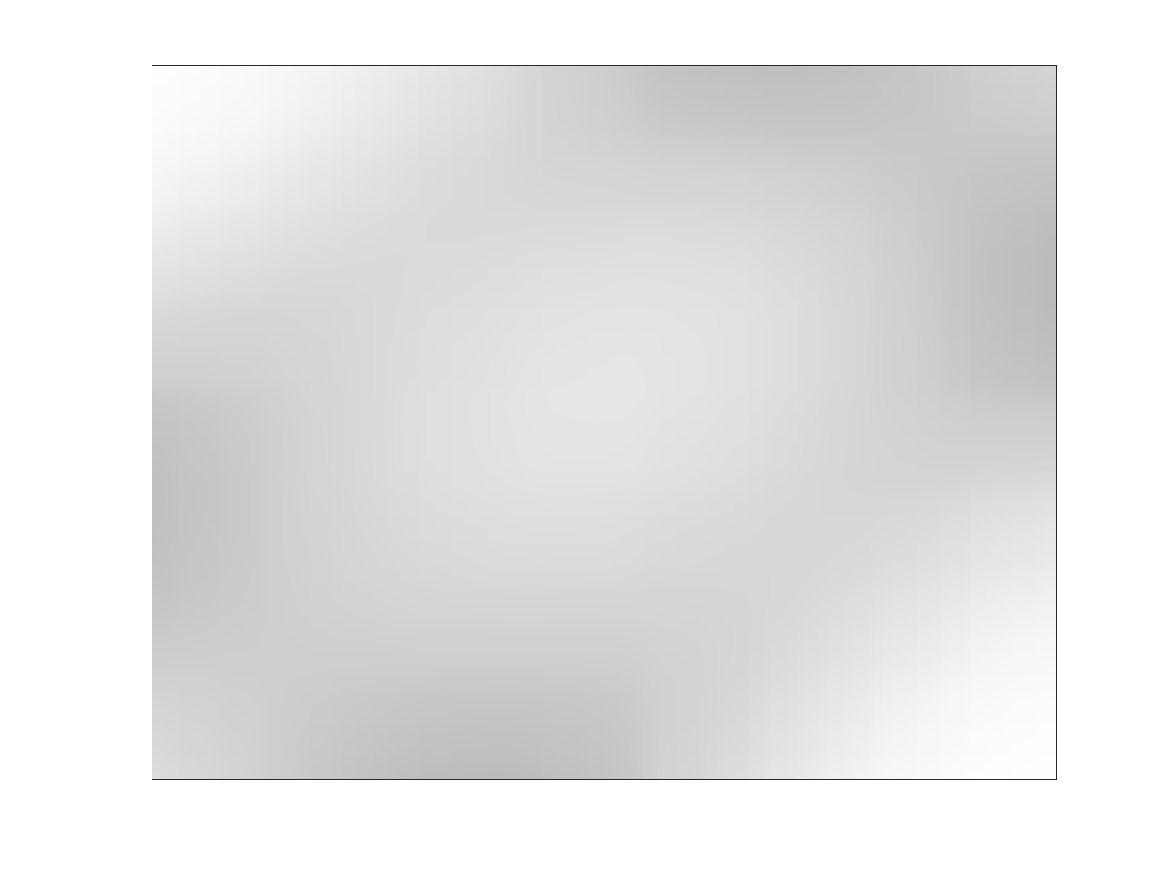}}
		\end{minipage}
		\begin{minipage}[c]{0.15\linewidth}
			\centering
			{${{t_5=3/5}}$\par\medskip
				\includegraphics[width=\textwidth]{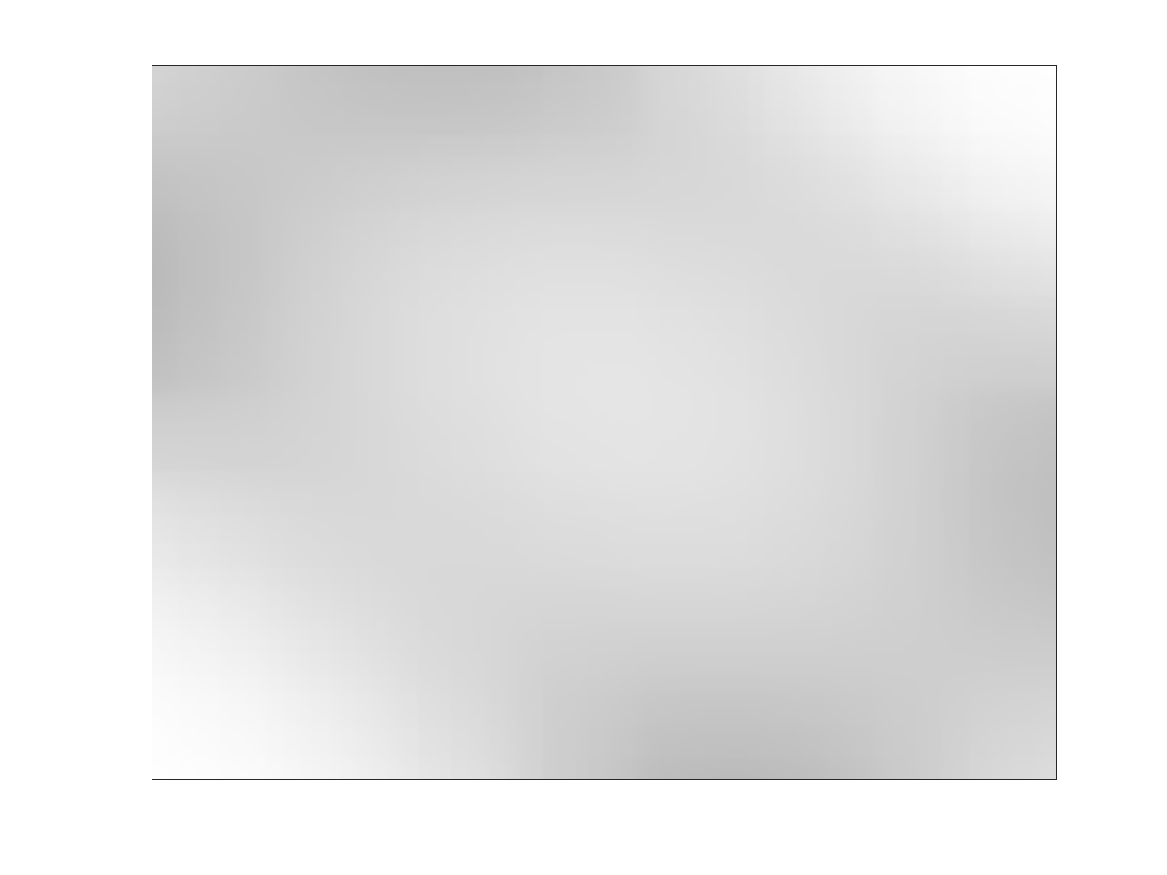}}\quad
			{\includegraphics[width=\textwidth]{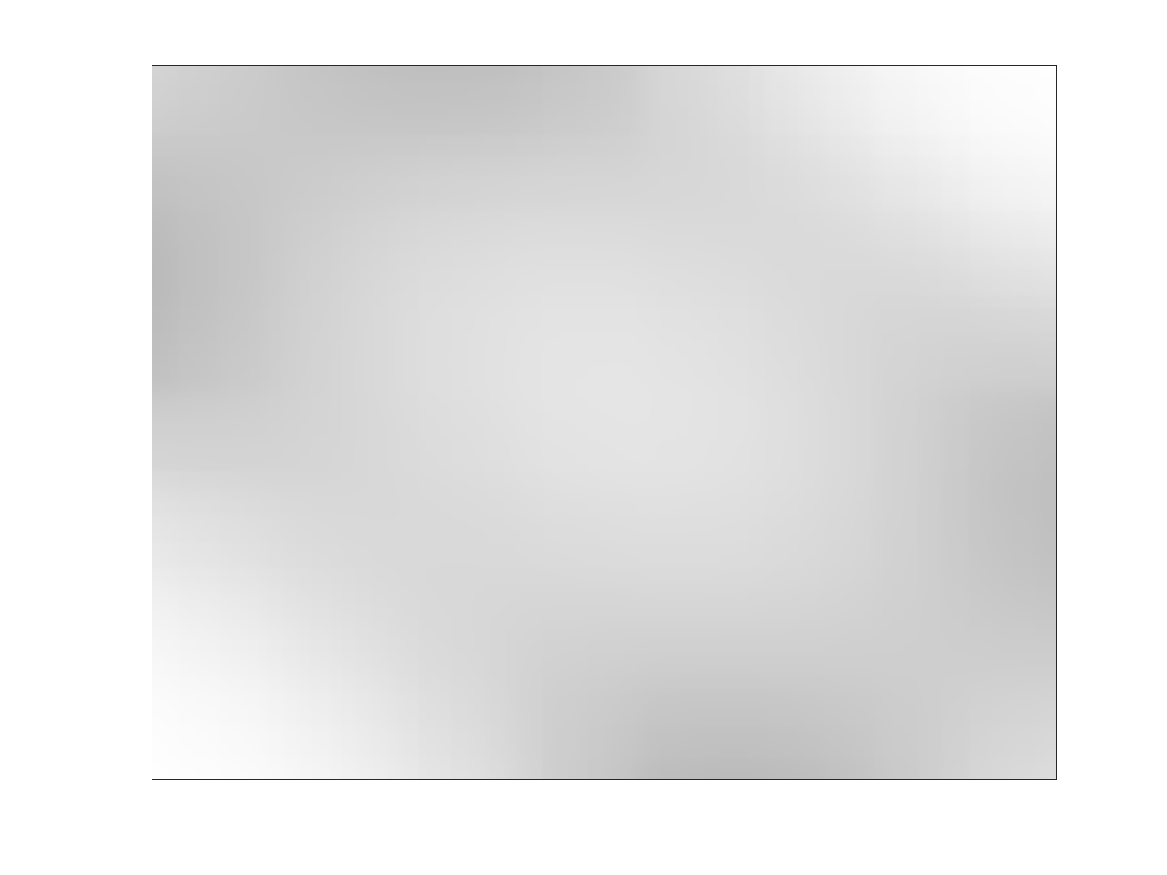}}
		\end{minipage}
		\begin{minipage}[c]{0.15\linewidth}
			\centering
			{${{t_5=4/5}}$\par\medskip
				\includegraphics[width=\textwidth]{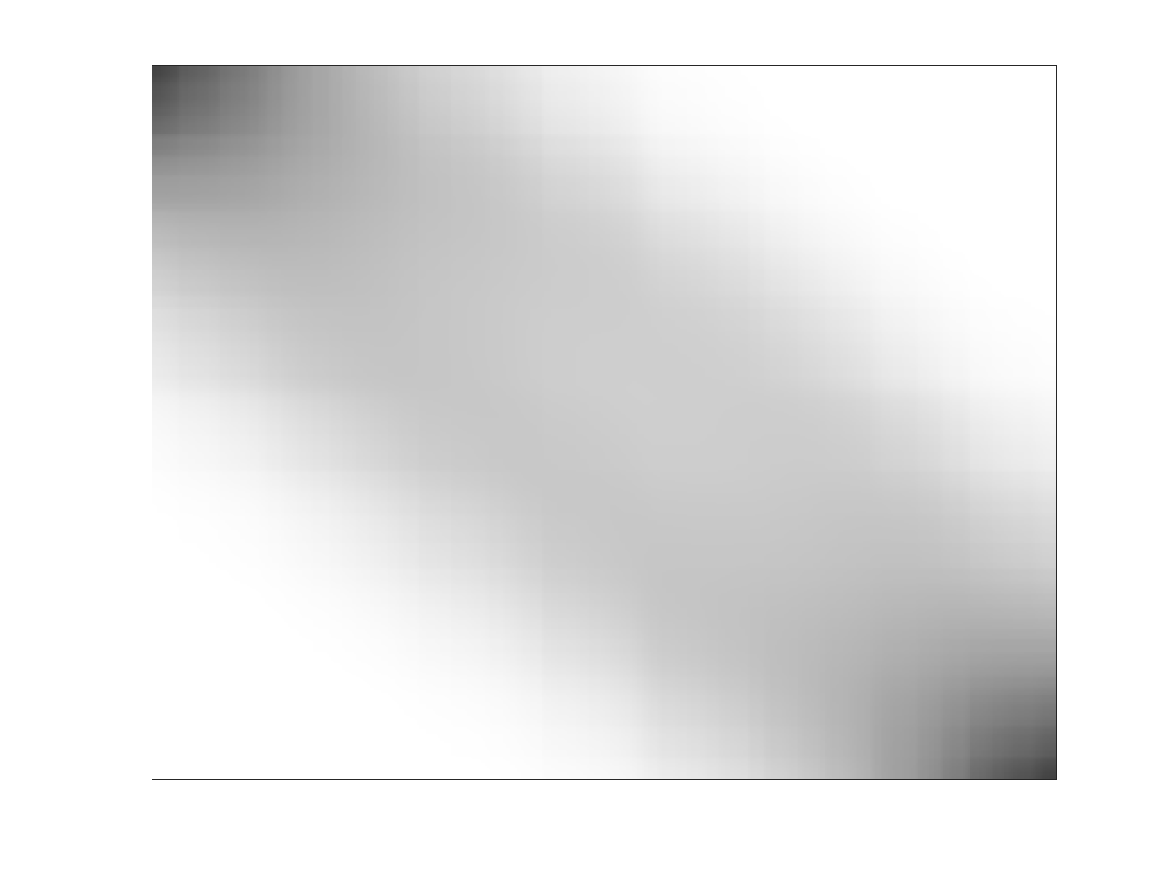}}\quad
			{\includegraphics[width=\textwidth]{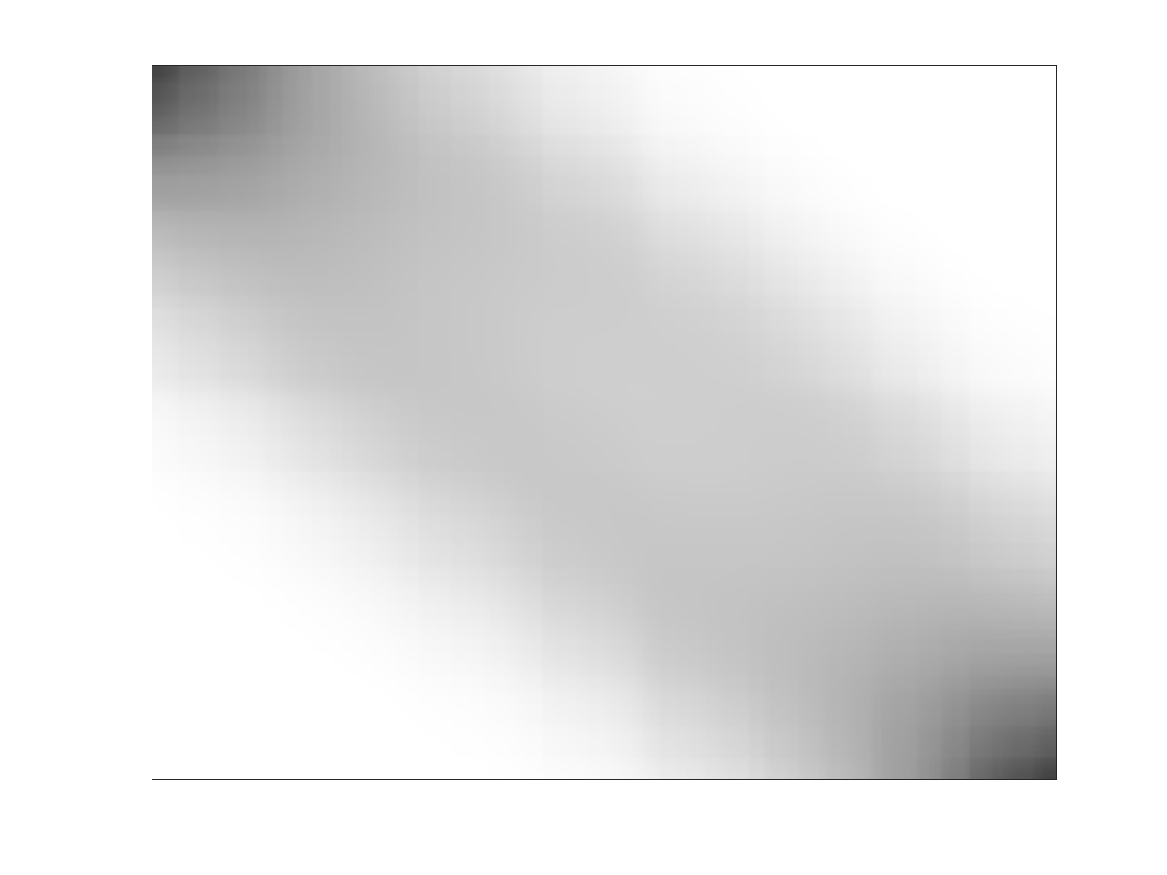}}
		\end{minipage}
		\begin{minipage}[c]{0.17\linewidth}
			\centering
			{${{t_6=1}}$\par\medskip
				\includegraphics[width=\textwidth]{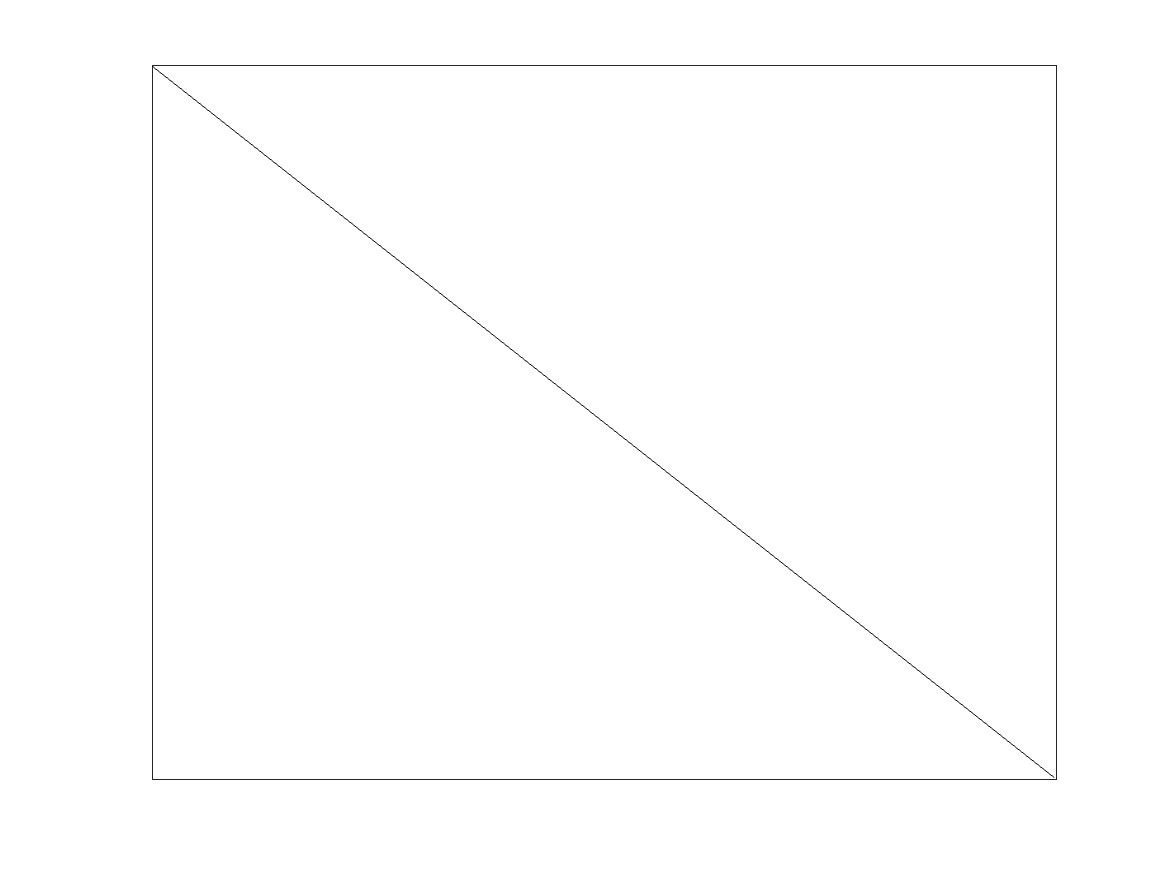}}
		\end{minipage}
		\caption{Joint measures $\boldsymbol{\Pi}_{1,k}$, $k=2, \cdots, 5$ representing the movement of the particles from initial position $x \in [0, 1]$ ($x$-axis) to position $x_{t_k}\in [0, 1]$ ($y$-axis) at time~$t_k$, where $\sigma(x)=1-x$.   \textbf{First row:} Sinkhorn algorithm. \textbf{Second row:} NFFT-Sinkhorn algorithm.}
		\label{fig:Euler-flow-Sinkhorn-map1}
	\end{figure}

	\begin{figure}[!ht]
		\centering
		\begin{minipage}[c]{0.17\linewidth}
			\centering
			{${{t_1=0}}$\par\medskip
				\includegraphics[width=\textwidth]{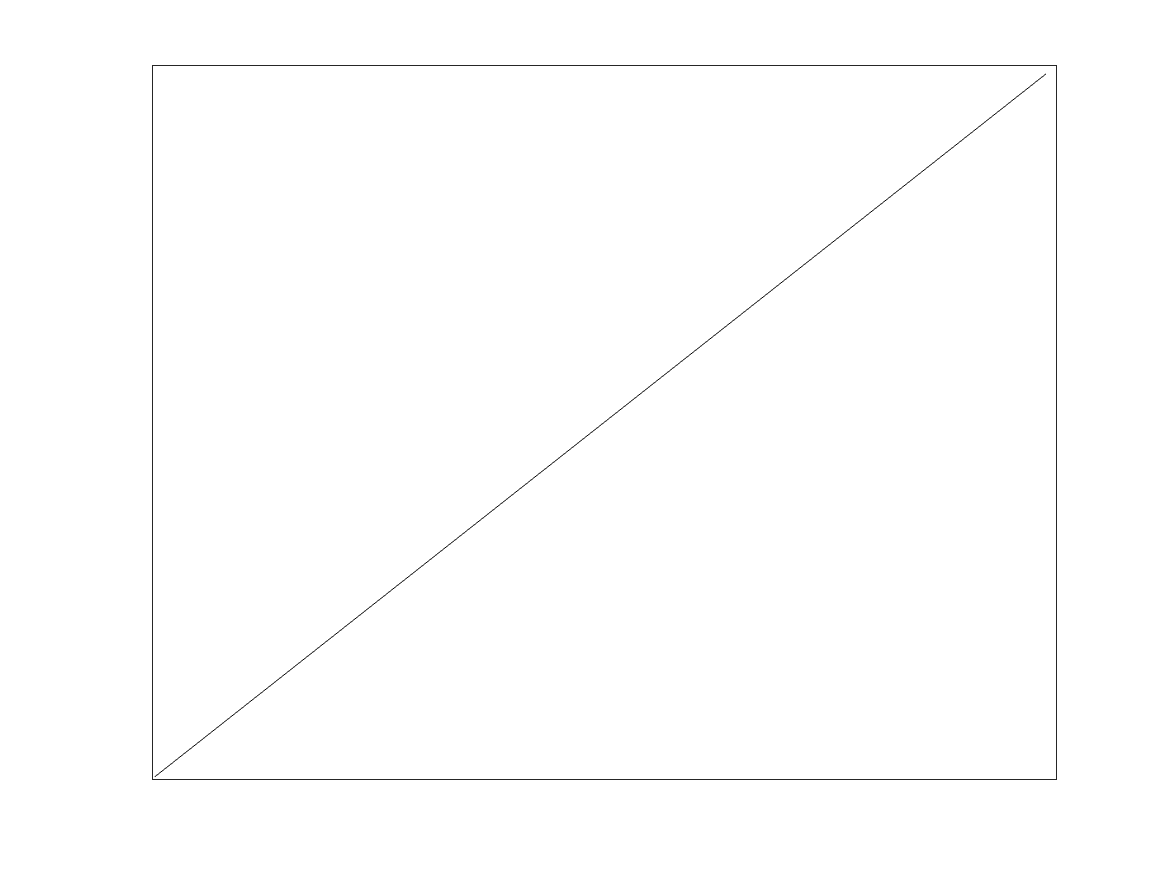}}\quad
		\end{minipage}
		\begin{minipage}[c]{0.15\textwidth}
			\centering
			{${{t_2=1/5}}$\par\medskip
				\includegraphics[width=\textwidth]{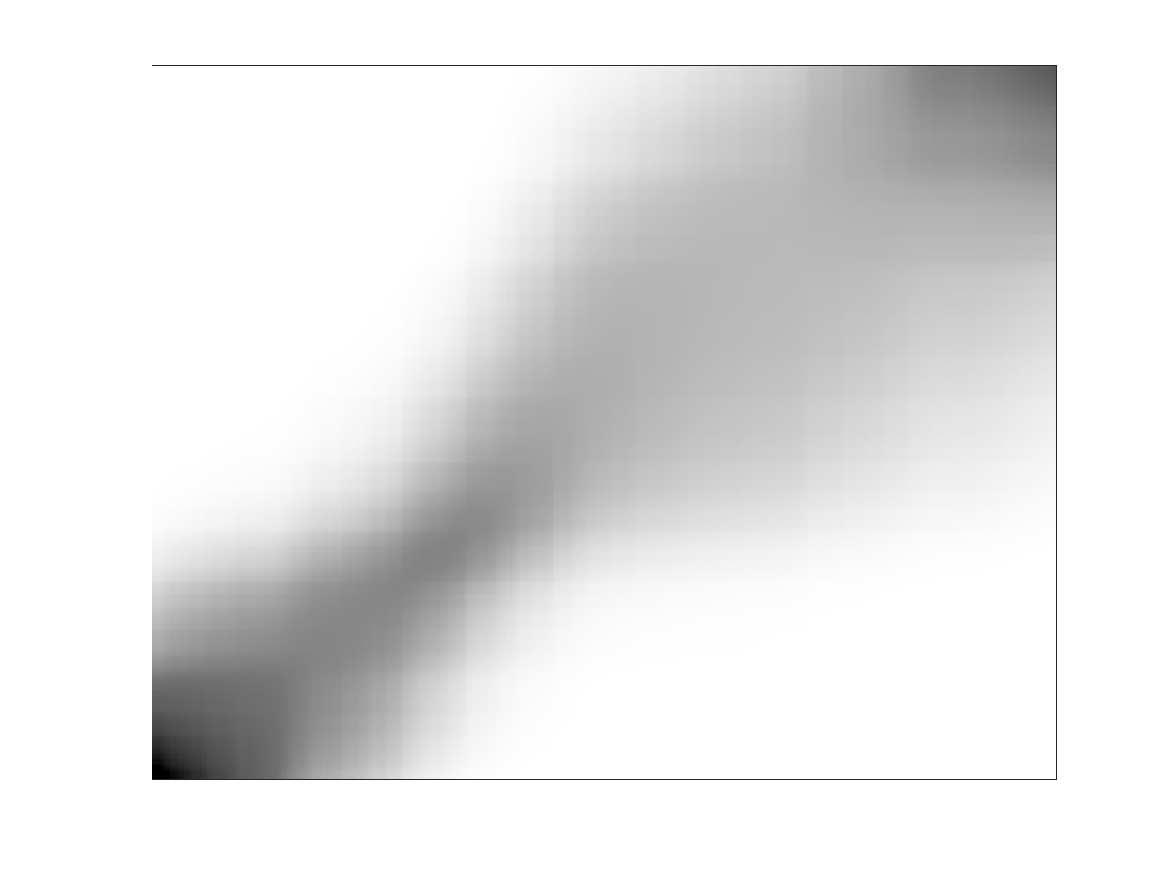}}\quad
			{\includegraphics[width=\textwidth]{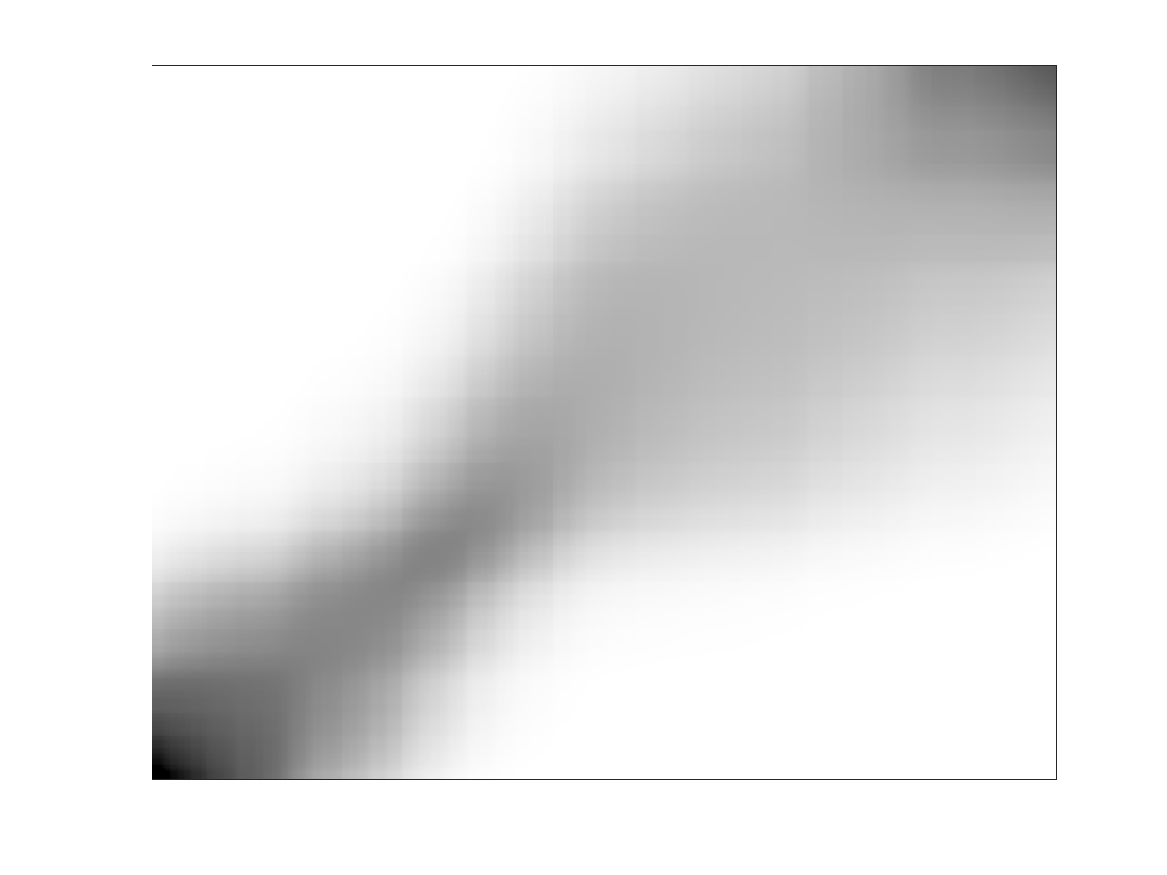}}
		\end{minipage}
		\begin{minipage}[c]{0.15\linewidth}
			\centering
			{${{t_3=2/5}}$\par\medskip
				\includegraphics[width=\textwidth]{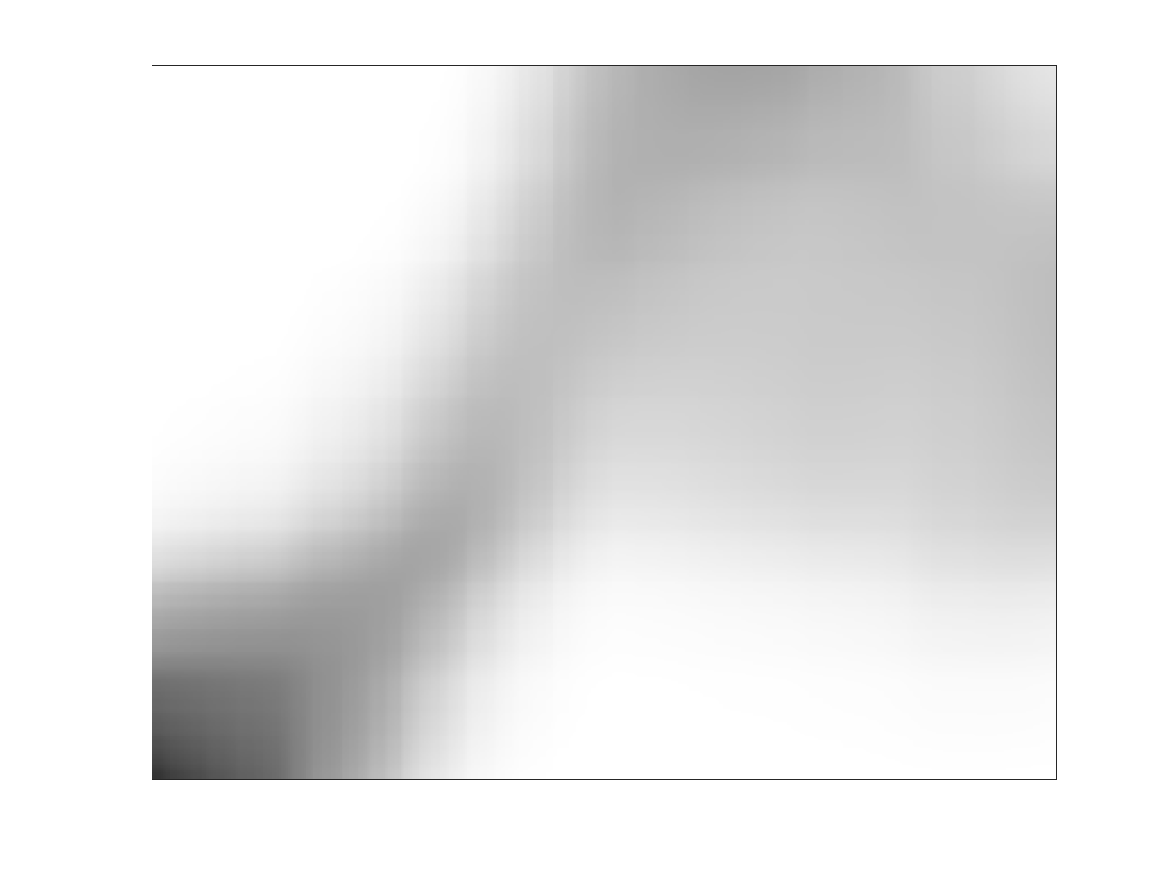}}\quad
			{
				\includegraphics[width=\textwidth]{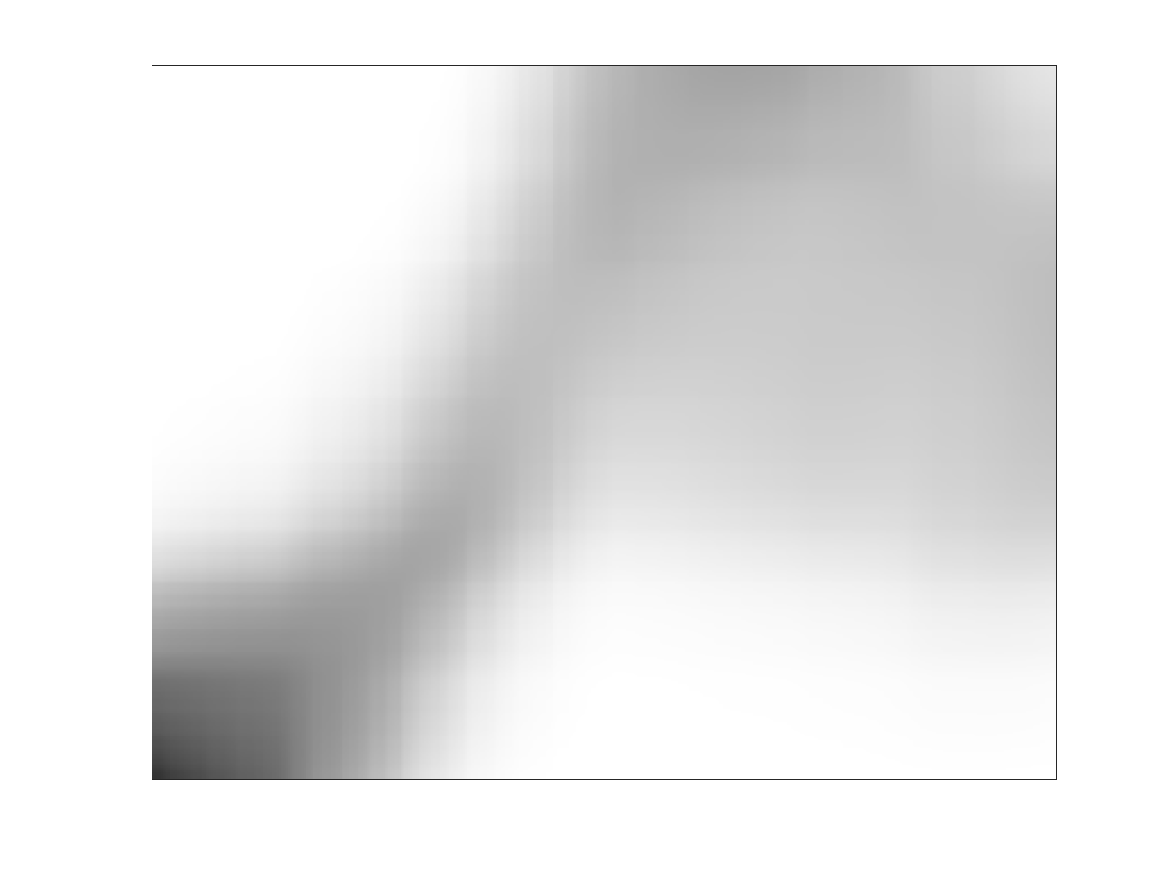}}
		\end{minipage}
		\begin{minipage}[c]{0.15\linewidth}
			\centering
			{${{t_4=3/5}}$\par\medskip
				\includegraphics[width=\textwidth]{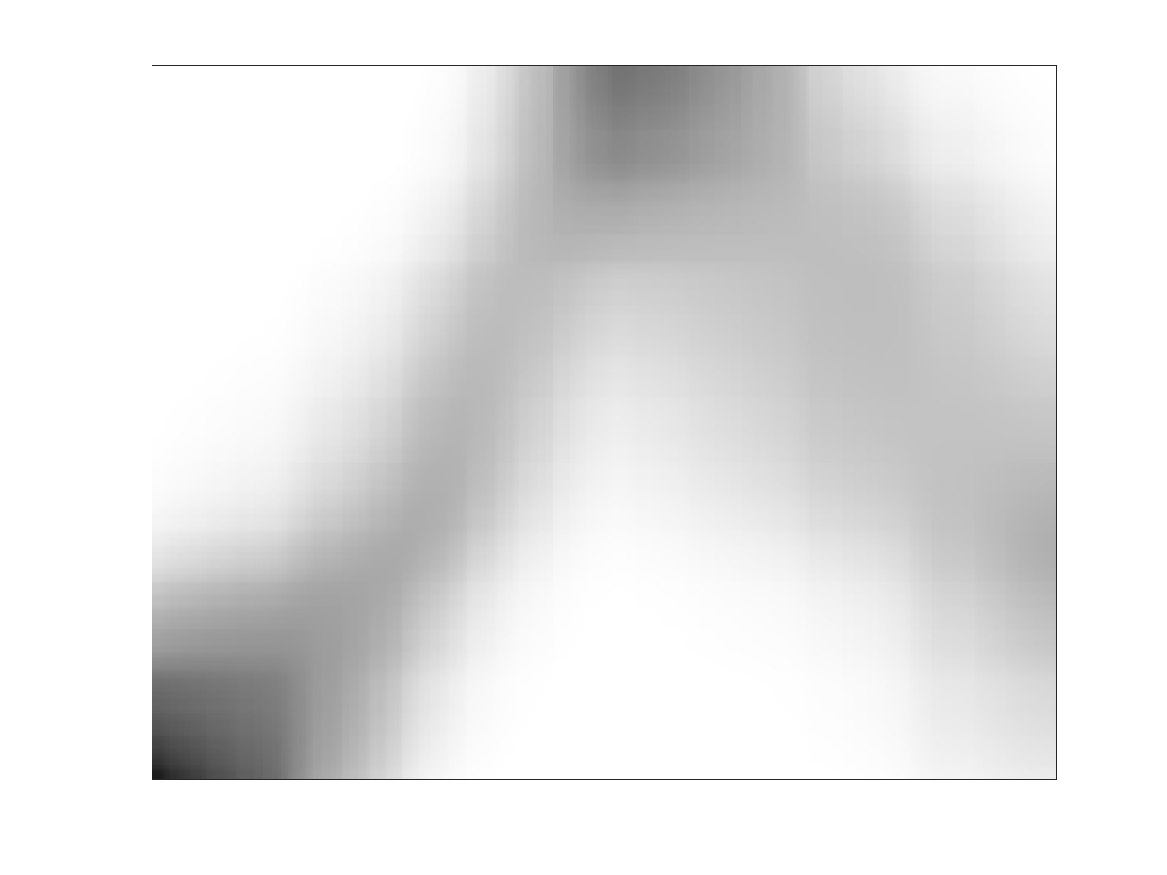}}\quad
			{\includegraphics[width=\textwidth]{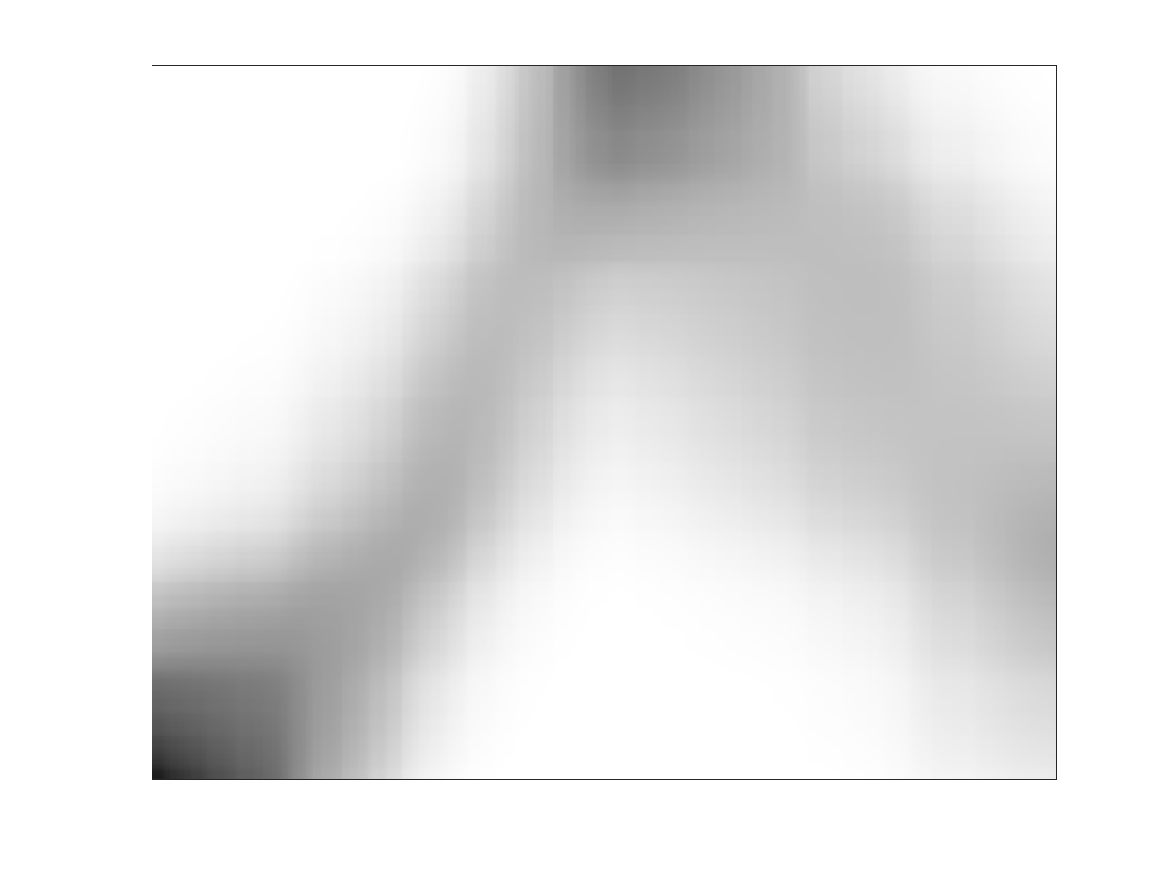}}
		\end{minipage}
		\begin{minipage}[c]{0.15\linewidth}
			\centering
			{${{t_5=4/5}}$\par\medskip
				\includegraphics[width=\textwidth]{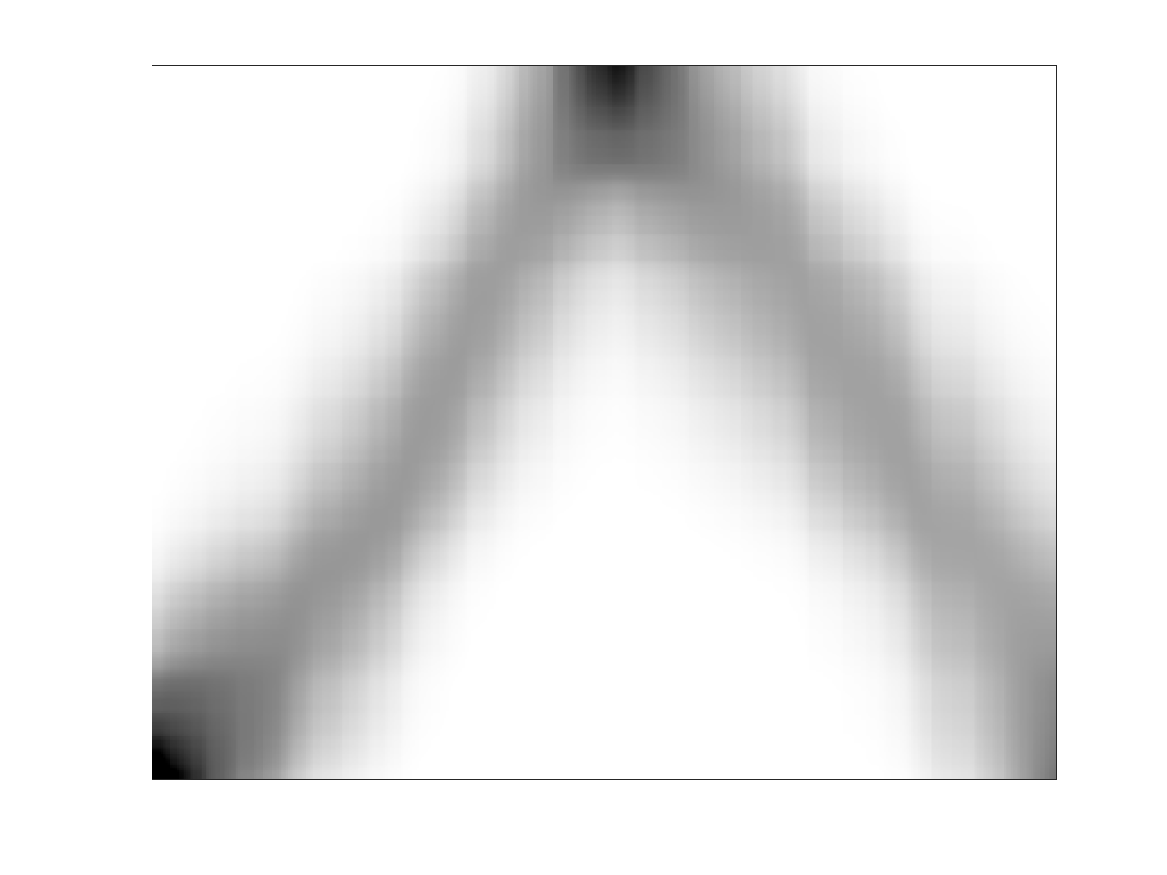}}\quad
			{\includegraphics[width=\textwidth]{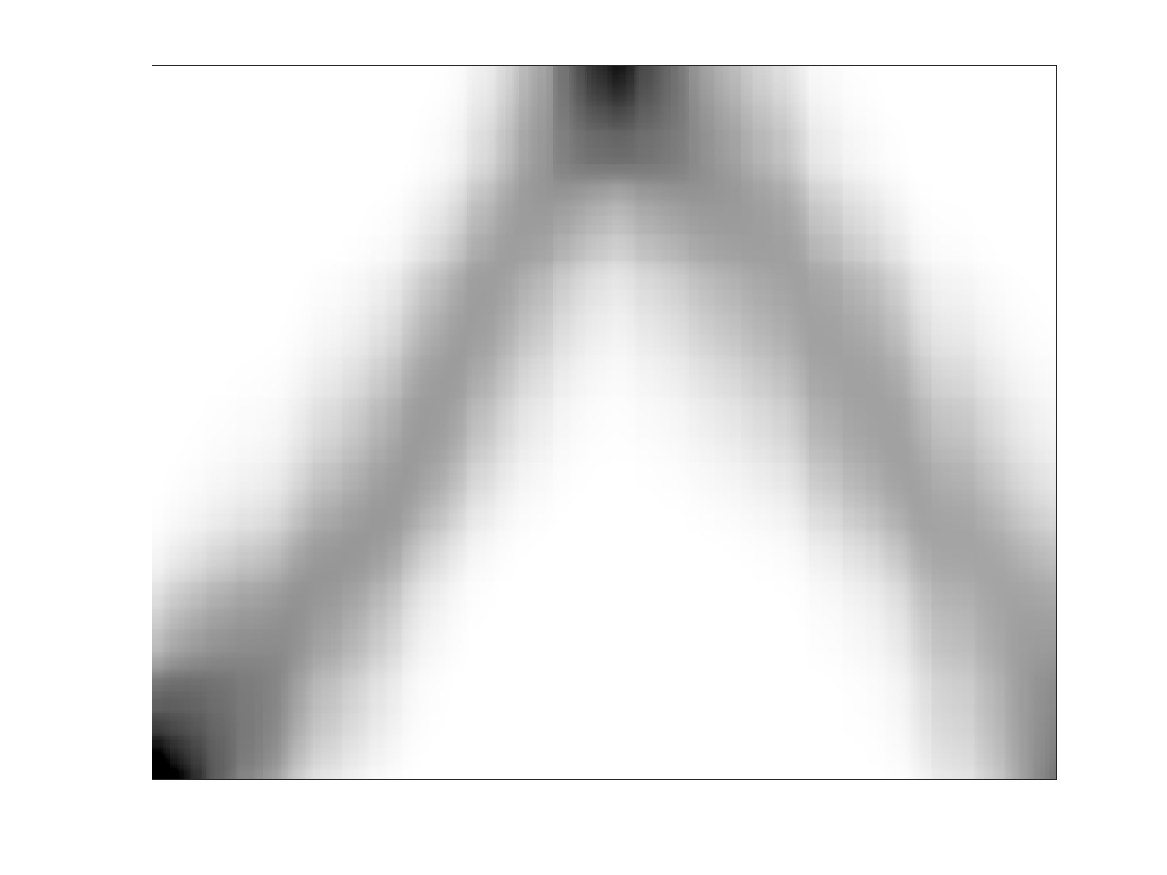}}
		\end{minipage}
		\begin{minipage}[c]{0.17\linewidth}
			\centering
			{${{t_6=1}}$\par\medskip
				\includegraphics[width=\textwidth]{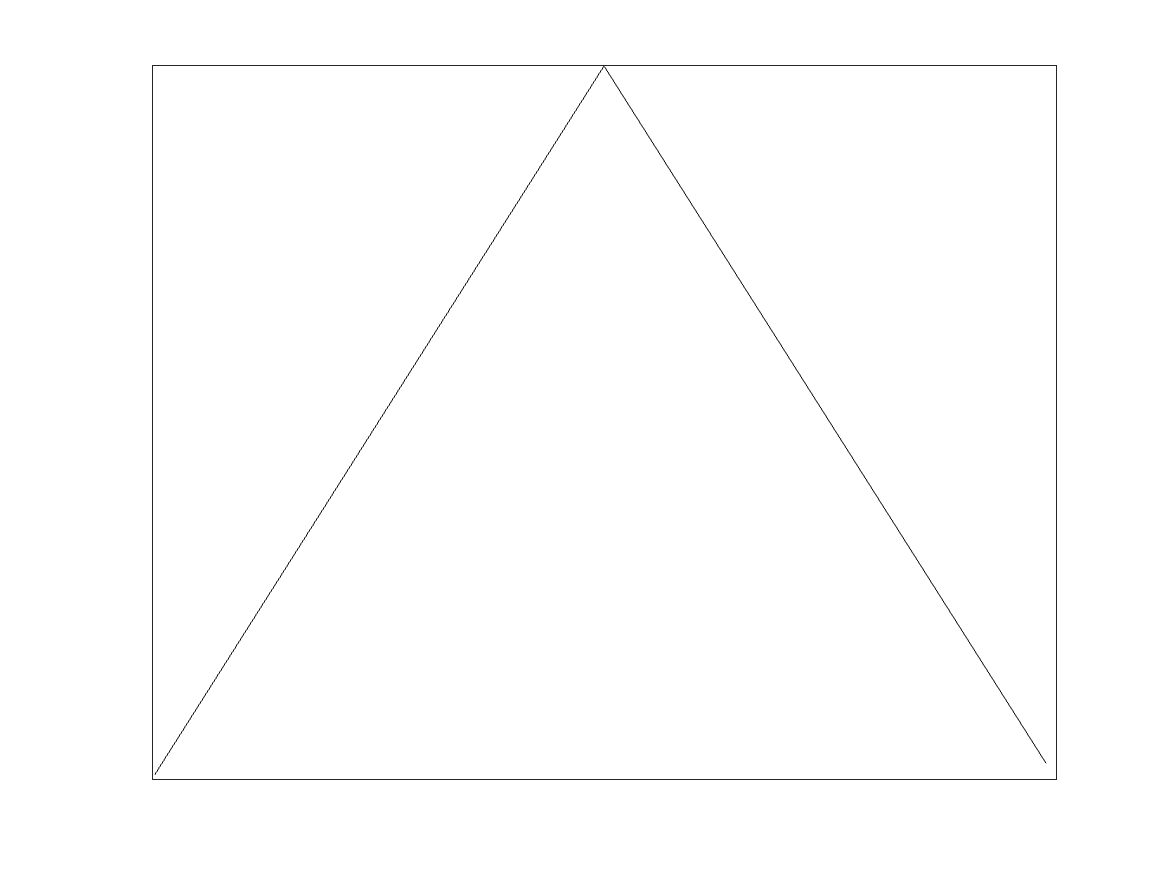}}
		\end{minipage}
		\caption{Joint measures as in Figure~\ref{fig:Euler-flow-Sinkhorn-map1}, but with the function $\sigma(x)=\min(2x, 1-2x)$. }
		\label{fig:Euler-flow-Sinkhorn-map2}
	\end{figure}
	
	\section{Conclusions} 
	We have proposed the NFFT-Sinkhorn algorithm to solve the $\MOTe$ problem efficiently. Assuming that the cost function of the multi-marginal optimal transport decouples according to a tree or a circle, we obtain a linear complexity in $K$. The complexity of the algorithm with respect to the numbers $n_k,$ $k\in [K]$, of atoms of the discrete marginal measures is further improved by the non-uniform fast Fourier transform. 
	This results in a considerable acceleration in our numerical experiments compared to the usual Sinkhorn algorithm. The tree-structured $\MOTe$ problem gives a much better numerical complexity than the circle-structured $\MOTe$ problem due to the fact that 
	in the latter case, matrix--matrix products are required in Algorithm~\ref{alg:sink-circle} instead of just matrix--vector products of Algorithm~\ref{alg:sink-tree}.
	
	\vspace{6pt}

	\subsection*{Acknowledgments}
	We gratefully acknowledge funding by the  BMBF $01$|S$20053$B project SA$\ell$E.
	Furthermore, we gratefully acknowledge funding by the German Research Foundation DFG (STE 571/19-1, project number 495365311). We also thank the anonymous reviewers for making valuable suggestions to improve the article.

\end{document}